\newcommand\ds\displaystyle
\newtheorem{theorem}{Theorem}[section]
\newtheorem{proposition}[theorem]{Proposition}
\newtheorem{lemma}[theorem]{Lemma}
\theoremstyle{definition}
\theoremstyle{remark}
\newtheorem{remark}[theorem]{Remark}
\def\R{\mathbb{R}}
\def\RN{{\R^N}}
\def\HN{{\mathbb{H}^N}}
\definecolor{verde}{RGB}{20,150,100}
\newcommand{\Om}{\Omega}
\newcommand{\om}{\omega}
\newcommand{\Hyp}{{\mathbb H}}
\newcommand{\Sph}{{\mathbb S}}
\DeclareMathOperator{\sn}{{sn}}
\DeclareMathOperator{\sign}{sign}
\newcommand{\e}{\varepsilon}
\newcommand{\lb}{\lambda}
\newcommand{\MC}{{\mathcal C}_{plate}}
\newcommand{\MCM}{{\mathcal C_{mbr}}}
\newcommand{\sm}{\setminus}
\newcommand{\sq}{\subseteq}
\newcommand{\ov}{\overline}
\newcommand{\Li}{\operatorname{Li}}
\newcommand{\mystar}{\filledstar}
\newcommand{\1}{\mathbf{1}}
\begin{document}

%%%%%%%%%%%%%%%%%%%%%%%%%%%%%%%%%%%%%%%%%%%%%%%%%%%%%%%%%%
 %amsart format
\title[Fourth order Saint-Venant inequalities]{Fourth order Saint-Venant inequalities: maximizing compliance and mean deflection among clamped plates}

\author[M. Ashbaugh]
{Mark Ashbaugh }
\address[Mark Ashbaugh]{University of Missouri, Columbia, USA}
\email[M. Ashbaugh]{ashbaughm@missouri.edu}
\author[D. Bucur]
{Dorin Bucur}
\address[Dorin Bucur]{ Universit\'e Savoie Mont Blanc, CNRS, LAMA,
73000 Chamb\'ery, France
}
\email[D. Bucur]{dorin.bucur@univ-savoie.fr}
\author[R. S. Laugesen]
{Richard S. Laugesen}
\address[Richard S. Laugesen]{ University of Illinois,
Urbana IL 61801, USA
}
\email[R. S. Laugesen]{Laugesen@illinois.edu}

\author[R. Leylekian]
{Rom\'eo Leylekian}
\address[Rom\'eo Leylekian]{ Grupo de F\'{\i}sica Matem\'{a}tica, Instituto Superior T\'{e}cnico, Universidade de Lisboa, Av. Rovisco Pais, 1049-001 Lisboa, Portugal
}
\email[R. Leylekian]{romeo.leylekian@univ-amu.fr}

\subjclass[2010]{35J35, 35Q99}
\keywords{clamped plate, constant load, torsional rigidity}

\date{\today}
\maketitle

\begin{abstract}   
We prove a fourth order analogue of the Saint-Venant inequality: the mean deflection of a clamped plate under uniform transverse load is maximal for the ball, among plates of prescribed volume in any dimension of space. The method works in Euclidean space, hyperbolic space, and the sphere. 

Similar results for clamped plates under small compression and for the compliance under non-uniform loads are proved to hold in two dimensional Euclidean space, with the higher dimensional and  curved cases  of those problems left open.
 \end{abstract}

\section{\bf Introduction}

Consider a clamped plate $\Om$ under transverse load $\rho$, where  $\Om \sq \RN$, $N \ge 1$, is open and has finite measure and $\rho : \Om \to [-1,1]$. Writing $\sigma$ for the lateral compression parameter, the equation is  
\begin{equation}
\label{as01}
\begin{cases}
\Delta^{\! 2} u +\sigma^2 \Delta u=\rho&\text{in }\Om, \\
u=\frac{\partial u}{\partial n} =0&\text{on }\partial\Om .
\end{cases}
\end{equation}
When $\sigma^2$ is less than the first buckling eigenvalue $\Lambda_1(\Om)$ of the clamped plate, the equation is well posed in a weak variational sense (Section \ref{sec:s2}) and has a unique solution we denote by $u_{\Om, \rho}$. We aim to maximize the {\it compliance} of the plate, 
 \[
 {\MC} (\Om, \rho) = \ds \int_\Om \rho u_{\Om, \rho} \, dV.
 \]
 Notice that when $\rho \equiv 1$, the compliance 
 \[
 {\MC} (\Om, 1)= \ds  \int_\Om u_{\Om, 1} \, dV
 \]
coincides with  the {\it mean} deflection of the plate. The goal is to maximize compliance while varying both $\Om$ and $\rho$: to find 
\begin{equation}
\label{as03}
\max \MC(\Om, \rho) \end{equation}
when the volume $V(\Om)=m$ is prescribed and the load $\rho$ runs over $L^\infty (\Om, [-1,1])$.

\subsection*{Second order analogues} This question is reminiscent of Saint-Venant's inequality for the deflection of a loaded membrane (or alternatively, torsional rigidity of a beam having that simply connected cross-section). But the plate problem is more challenging, as we now explain. Let $w_{\Om,\rho}\in H^1_0(\Om)$ be the weak solution of 
\[
\begin{cases}
-\Delta w =\rho&\text{in }\Om, \\
w=0&\text{on }\partial\Om .
\end{cases}
\]
The Saint-Venant inequality states that mean deflection under constant load is largest for the ball:
\[
\int_\Om w_{\Om,1}  \, dV \le \int_{\Om^*}  w_{\Om^*,1} \, dV,
\]
where $\Om^*$ is a ball in $\RN$ with the same volume as $\Om$.

In this membrane case, if the constant load $1$ is replaced by any other $\rho\in L^\infty (\Om, [-1,1])$, then the maximum principle yields that $|w_{\Om, \rho}| \le w_{\Om, |\rho|} \le  w_{\Om, 1}$. Writing $\MCM(\Om, \rho)=\int _\Om w_{\Om, \rho}\rho \, dV$ for the compliance of the membrane, we deduce 
\[
\MCM(\Om, \rho)\le \MCM(\Om, |\rho|) \leq \MCM(\Om, 1) .
\]
Combining with Saint-Venant, one see that the maximal compliance 
\[
\max\{\MCM(\Om, \rho) : V(\Om)=m \text{ and } \rho \in L^\infty (\Om, [-1,1])\}
\]
is attained on the ball of volume $m$ with $\rho \equiv 1$. 

The analogous reasoning fails for the plate problem because the maximum principle does not hold for the fourth order bilaplacian. It could perhaps happen that for some couple $(\Om, \rho)$ we have $\MC(\Om, \rho)>\MC(\Om, 1)$.  Consequently, when addressing problem \eqref{as03} it is not enough to restrict attention to the constant load $\rho \equiv 1$, because the problem 
\begin{equation}
\label{as03.1}
 \max\{\MC(\Om, 1) : V(\Om)=m\}
 \end{equation}
 is a priori different from \eqref{as03} and might have a different solution. However, we conjecture that both problems \eqref{as03} and \eqref{as03.1} are maximized by the ball with $\rho \equiv 1$. 
 
\subsection*{Fourth order results}
Our plate results are as follows. 

\subsubsection*{Constant load, no compression, all dimensions,   constant  curvatures --- Theorem \ref{as10general}} For constant load $\rho \equiv 1$ and zero compression $\sigma=0$, we prove that the compliance $\MC (\Om,1)$ is maximal for a geodesic ball, in any dimension of Euclidean or hyperbolic space or the sphere. An explicit formula for this maximal compliance is found in Proposition \ref{pr:maxvalue} for $N$-dimensional Euclidean balls and for $2$-dimensional hyperbolic and spherical balls. 

\subsubsection*{Constant load, small compression, Euclidean plane --- Theorem \ref{as13}} For open sets in the Euclidean plane under constant load $\rho \equiv 1$, the compliance is maximal for the ball provided the compression parameter is sufficiently close to $0$.

\subsubsection*{Variable load, no compression, Euclidean plane --- Theorem \ref{th:total}} In the plane, the compliance problem \eqref{as03} is solved in full generality when $\sigma=0$. 

Open problems are raised in Section \ref{sec:compliance} for mean deflection and total deflection, and we close the paper in Section \ref{sec:alternative} with remarks on possible alternative methods of proof. 

 \subsection*{Methods and related literature}
The Saint-Venant inequality can be proved straightforwardly by symmetric decreasing rearrangement (Schwarz symmetrization), as for Faber--Krahn and many other isoperimetric inequalities associated with the Laplacian. Inequalities for fourth order operators are in general much more difficult. 
 
The isoperimetric inequality for the first eigenvalue of a clamped plate ($\Delta^2 u = \Gamma u$) has been proved in two and three dimensions by Nadirashvili \cite{Nad95} and Ashbaugh-Benguria \cite{AshBen95} respectively, building on the approach of Talenti \cite{Tal81}. For the hyperbolic and spherical situations, see Krist\'{a}ly \cite{Kristaly2020,Kristaly2022}. These proofs are all based on the introduction of a two-ball auxiliary problem via Talenti's elliptic comparison theorem. The limitation to dimensions 2 and 3 arises because in dimensions 4 and higher, the minimizer of the auxiliary problem is not a single ball but instead consists of two equal-sized balls, by Ashbaugh and Laugesen \cite{AL96,ABL97}. For similar reasons, the buckling load minimization problem for clamped plates remains open even in dimension $2$. 

The recent work of Leylekian \cite{L24a,L24b} introduces a new approach to these plate eigenvalue problems, but does not resolve the conjectures. 
 
For the variable load result in Theorem \ref{th:total}, the auxiliary problem differs critically from the constant load case and inherits all the expected difficulties of the fourth order PDE. To prove the theorem we proceed explicitly and solve the auxiliary problem exactly in all dimensions. But only in dimensions $1$ and $2$ does the auxiliary problem yield a single ball as the maximizer. Thus, just as for the clamped plate eigenvalue problem, one runs up against the technical limitations of the two-ball method. We also provide an alternative proof relying on a new, direct comparison result that extends Talenti's theorem to sign changing solutions. However,   this approach   still works only in dimension $2$  (see Theorem \ref{thm:talenti-signe}).

\section{\bf Constant load results ($\rho \equiv 1$): zero compression ($\sigma=0$) in Euclidean, spherical and hyperbolic spaces, and small compression in the plane}\label{sec:s2}

Consider the three constant curvature manifolds, Euclidean space $\RN$, the round unit sphere $\Sph^N$ with curvature $+1$, and $N$-dimensional hyperbolic space $\HN$ with sectional curvature $-1$. We assume $N \geq 1$ for the Euclidean space and $N \geq 2$ for the sphere and hyperbolic space. 

In each situation, denote the space by $M$, the metric by $\mathfrak{g}$, the Laplace--Beltrami operator by $\Delta_\mathfrak{g}$, and the volume element by $dV=dV_\mathfrak{g}$. Here $\Delta_\mathfrak{g}$ is the ``geometer's Laplacian'', a positive operator. To maintain consistency with the Euclidean case, we write 
\[
\Delta=-\Delta_\mathfrak{g}
\]
for the ``analyst's Laplacian'', and use that operator consistently. Write $\nabla$ for the gradient operator and $|\nabla v|=\mathfrak{g}(\nabla v,\nabla v)^{1/2}$ for the length of the gradient vector with respect to the metric.

\subsection{Mean deflection and minimal energy} Let $\Om \subset M$ be a nonempty open set with finite measure,  so that $H^2_0(\Om)$ imbeds compactly into $L^2(\Om)$. On the sphere we further require $1 \notin H^2_0(\Om)$, which ensures $\Om$ cannot be the whole sphere.  The first buckling eigenvalue of the clamped plate  ($\Delta^2 u + \Lambda \Delta u = 0$)  is 
\[
\Lambda_1(\Om)=\inf \big\{ \int_\Om |\Delta v|^2 \, dV \Big/ \int_\Om |\nabla v|^2 \, dV : 0 \not\equiv v \in H^2_0(\Om) \big\} > 0.
\]
 
Given a parameter $\sigma \ge 0$, consider the clamped plate equation on $\Omega$ under compression and constant load:
\begin{equation}
\label{as01general}
\begin{cases}
\Delta^{\! 2} u_\Om+\sigma^2 \Delta u_\Om=1&\hbox{in }\Om,\\
u_\Om=\frac{\partial u_\Om}{\partial n} =0&\hbox{on }\partial\Om.
\end{cases}
\end{equation}
Provided $\sigma ^2 < \Lambda_1(\Om)$,   the operator $\Delta^{\! 2} +\sigma^2 \Delta$ is associated to a coercive quadratic form  and the equation is well posed in the weak variational sense, where the weak form says
\[
\int_\Om \Delta u_\Om \, \Delta v \, dV- \sigma ^2 \int_\Om \nabla u_\Om \cdot \nabla v  \, dV = \int_\Om v \, dV, \qquad v \in H^2_0(\Om) .
\]
If $\Om$ has smooth boundary then weak solutions are classical smooth solutions. 

The existence of a solution comes from the minimization of the following energy
\[
E(\Om, \sigma)= \min_{v \in H^2_0(\Om)} \frac{1}{2} \int_\Om |\Delta v|^2 \, dV -\frac{\sigma^2}{2} \int_\Om |\nabla v|^2 \, dV -\int_\Om v\, dV .
\]
Note that if $\sigma^2 >0$ then the energy is not formally convex. However, uniqueness of the solution is a consequence of linearity of the PDE and the restriction $\sigma^2 <\Lambda_1(\Om)$.

Any minimizer $u_\Om$ satisfies equation \eqref{as01general} and so by choosing $v=u_\Om$ in the weak formulation we deduce (using our assumption $\sigma ^2 < \Lambda_1(\Om)$) that the mean deflection of the solution is positive and equals $-2$ times the energy:
\begin{align}
\int_\Om u_\Om \, dV
& = \int_\Om |\Delta u_\Om|^2 \, dV - \sigma^2 \int_\Om |\nabla u_\Om|^2 \, dV > 0 \label{deflectionpositive} \\
& =  - 2 E(\Om, \sigma) . \notag 
\end{align}
In particular, the energy is negative:
 \[
 E(\Om, \sigma) < 0.
 \]
 
Further, the energy is  domain monotonic, with 
\[
\Omega_1 \subseteq \Omega_2 \quad \Longrightarrow \quad E(\Omega_1, \sigma) \ge E(\Omega_2, \sigma).
\]
This inequality is strict if $\Om_1$ differs from $\Om_2$ by a {\it substantial} set, for example a set of positive measure. For if $E(\Omega_1, \sigma) = E(\Omega_2, \sigma)$ then after extending the solution on $\Om_1$ by $0$, it is also an energy minimizer and hence a solution of \eqref{as01general} on $\Om_2$ and so its derivatives vanish almost everywhere on $\Om_2\sm \Om_1$, contradicting \eqref{as01general} on that set.

\subsection{Zero compression}
Our results are strongest when the plate is not subjected to lateral compression, that is, when $\sigma=0$. The partial differential equation \eqref{as01general} then simplifies to  
\begin{equation}
\label{as01zero}
\begin{cases}
\Delta^{\! 2} u_\Om=1&\hbox{in }\Om,\\
u_\Om=\frac{\partial u_\Om}{\partial n} =0&\hbox{on }\partial\Om,
\end{cases}
\end{equation}
and the energy is 
\[
E(\Om, 0)= \min_{v \in H^2_0(\Om)} \frac{1}{2} \int_\Om |\Delta v|^2 \, dV -\int_\Om v\, dV .
\]
\begin{theorem}[Ball minimizes the energy when compression is absent] \label{as10general}
Let $\Om \subset M$ be a nonempty open set with finite measure, and in the spherical case assume $1 \notin H^2_0(\Om)$. If $\Om^*$ is a geodesic ball with the same volume as $\Omega$ then the unique solution $u_\Om$ of \eqref{as01zero} satisfies
\begin{equation}
\label{as04general}
\int_\Om u_\Om \, dV \le \int_{\Om^*}  u_{\Om^*} \, dV ,
\end{equation}
or equivalently
\[
E(\Om,0) \geq E(\Om^*,0) .
\]
Equality occurs if and only if $\Om$ coincides with a geodesic ball, up to a set of $H^2$-capacity zero.
\end{theorem}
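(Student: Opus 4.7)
My plan is to prove $\int_\Om u_\Om \, dV \le \int_{\Om^*} u_{\Om^*} \, dV$ by a Talenti-style symmetrization extending the Nadirashvili--Ashbaugh-Benguria approach to the clamped plate eigenvalue problem. The starting point is the dual variational characterization
\[
\int_\Om u_\Om \, dV = -2E(\Om,0) = \sup_{0 \neq v \in H^2_0(\Om)} \frac{\bigl(\int_\Om v \, dV\bigr)^2}{\int_\Om |\Delta v|^2 \, dV}
\]
combined with the identity $\int_\Om u_\Om = \int_\Om f^2 \, dV$ from \eqref{deflectionpositive}, where $f := -\Delta u_\Om$. The clamped condition $\partial_n u_\Om = 0$ forces $\int_\Om f \, dV = 0$, and the explicit radial profile of $u_{\Om^*}$ (which ultimately appears in Proposition~\ref{pr:maxvalue}) already shows that $f$ generically changes sign even on the ball, so I would split $\Om = \Om_+ \cup \Om_- \cup \{f = 0\}$ according to the sign of $f$.

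On each nodal piece $\Om_\pm$, the function $u_\Om$ satisfies a Poisson problem $-\Delta u_\Om = f$ whose Dirichlet data comes partly from $\partial \Om$ and partly from the interior nodal set $\{f = 0\}$. I would then apply Talenti's pointwise elliptic comparison to each piece, obtaining radial comparison functions on a pair of geodesic balls $B_+, B_-$ with $|B_+| + |B_-| = |\Om|$ whose Rayleigh ratios dominate the Rayleigh ratio of the test function $v = u_\Om$. This reduces the original inequality to a two-ball auxiliary variational problem, in the same spirit as for the clamped plate eigenvalue.

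The main obstacle — and the decisive step — is then to show that this two-ball auxiliary problem is \emph{always} maximized by collapsing one ball to a point, i.e.\ by the single geodesic ball $\Om^*$, \emph{in every dimension}. This is exactly where the constant-load problem parts company with the eigenvalue problem, whose analogous auxiliary problem is minimized by two equal balls in dimension $\ge 4$ (the source of the $N \le 3$ limitation in Nadirashvili--Ashbaugh-Benguria). For the present problem, the homogeneity of the right-hand side $\rho \equiv 1$ should reduce the two-ball optimization to a one-parameter concavity/monotonicity statement in the radius ratio, with the explicit radial formulas underlying Proposition~\ref{pr:maxvalue} providing the quantitative input that makes the single-ball maximizer win regardless of $N$. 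The same scheme transfers to $\Hyp^N$ and $\Sph^N$ via the constant-curvature Talenti-type symmetrization results in Krist\'aly \cite{Kristaly2020,Kristaly2022}. Equality is then handled in the standard way: strict rearrangement inequality forces $u_\Om$ and $f$ to be radially symmetric on a single geodesic ball, and an $H^2$-capacity argument upgrades this to $\Om = \Om^*$ up to $H^2$-capacity zero.
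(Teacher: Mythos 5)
The core gap is in the decomposition. You propose to split $\Omega$ according to the sign of $f = -\Delta u_\Omega$, but the Talenti comparison machinery requires splitting according to the sign of $u_\Omega$ itself, so that each restricted function $u_\pm$ is nonnegative and vanishes on the boundary of its nodal piece. With your decomposition by sign of $f$, the function $u_\Omega$ is generically nonzero on the internal interface $\{f=0\}$ --- you acknowledge this --- yet Talenti's theorem (and its constant-curvature variants) is stated for nonnegative functions with zero Dirichlet data, so it simply does not apply on $\{f>0\}$ or $\{f<0\}$, and you offer no substitute. The paper instead writes $u_\Omega = u_+ - u_-$ with $\Omega_\pm = \{u_\pm > 0\}$ and rearranges $-\Delta u_\pm$ separately on $\Omega_+$ and $\Omega_-$; each $u_\pm$ then satisfies the Poisson equation with zero boundary data on $\partial\Omega_\pm$, which is exactly what Talenti needs. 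The sign change of $f$ on the ball is relevant only in that the rearranged right-hand sides $F, G$ may change sign, not as a reason to decompose; the decomposition is forced because $u_\Omega$ may change sign for a general domain.

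The second gap is that the decisive step --- showing the two-ball auxiliary problem is optimized by a single ball in every dimension and every constant-curvature space --- is asserted rather than proved. You appeal to "explicit radial formulas" as the quantitative input, but the paper records in Section 4 that closed forms are available only for Euclidean balls (all $N$) and for $2$-dimensional spherical and hyperbolic balls; in higher-dimensional $\Sph^N$ and $\Hyp^N$ no such formulas were found, and the paper instead proves single-ball optimality by a mass-transplantation argument applied to an \emph{asymmetric} two-ball energy obtained by discarding the nonnegative term $\int_{\Omega_-} u_- \, dV$. Your symmetric Rayleigh-quotient formulation is essentially Leylekian's variant (noted in Section 6), which the paper credits with yielding the single ball only by explicit computation in the Euclidean case; without the asymmetry and the transplantation idea, your route does not cover the general constant-curvature setting that the theorem claims.
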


We refer to \cite[Sections 3.3 and 3.8]{henrot-pierre} for an introduction to $H^1$- and $H^2$-capacities. One may regard $H^2$-capacity $\text{cap}_{H^2}$ as an outer measure that is finer than $H^1$-capacity $\text{cap}_{H^1}$, which is itself finer than Lebesgue measure, in the sense that  
\[
\text{cap}_{H^2}(E)=0 \quad \Longrightarrow \quad \text{cap}_{H^1}(E)=0 \quad \Longrightarrow \quad V(E)=0 
\]
for any set $E\subseteq M$.

\subsection{Extremal energies under zero compression} For the ball $\Omega^*$, the deflection function and energy appearing in Theorem \ref{as10general} can be computed explicitly in all Euclidean dimensions and also for the $2$-dimensional spherical and hyperbolic situations. 
 
 \smallskip
 \noindent \textsc{$N$-dimensional Euclidean space $M=\RN$ for $N \geq 1$}: the function satisfying the fourth order equation and boundary conditions in \eqref{as01general} with $\sigma=0$ is the radial function on the ball $\Omega^*$ of radius $R$ given by 
\begin{align} 
u_{\Om^*}(r) 
& = \frac{R^4}{8N(N+2)} \big( 2(1 - r^2/R^2) - (1 - r^4/R^4) \big) \notag \\
& = \frac{R^4}{8N(N+2)} (1 - r^2/R^2)^2 , \qquad r \in [0,R] .\label{euclidexplicit}
\end{align}
To verify this solution, one computes directly using that the Laplacian of a radial function is $r^{1-N}(r^{N-1}f^\prime(r))^\prime$.

 \smallskip
 \noindent \textsc{$2$-dimensional sphere $M=\Sph^2$}: the corresponding solution on a spherical cap $\Omega^*$ of radius $0 < R < \pi$ is
\[
\begin{split}
u_{\Om^*}(r)
& = 2 (\log \cos(r/2))^2 - 2 (\log \cos(R/2))^2 \\
& \quad + \Li_2(-\tan^2(r/2)) - \Li_2(-\tan^2(R/2)) + c \log \frac{\cos(r/2)}{\cos(R/2)} 
\end{split}
\]
where $c=4 \cot^2(R/2) \log \cos(R/2)$ and $\Li_2(z)=\sum_{k=1}^\infty z^k/k^2$ is the polylogarithm function. The solution is readily checked, since the spherical Laplacian of a function that depends only on the geodesic distance $r$ from the north pole on $\Sph^2$ is $(\sin r)^{-1}\big( (\sin r) f^\prime(r) \big)^\prime$.

 \smallskip
 \noindent \textsc{$2$-dimensional hyperbolic space $M=\Hyp^2$}: the solution on a geodesic disk $\Omega^*$ of radius $R>0$ is
\[
\begin{split}
u_{\Om^*}(r)
& = 2 (\log \cosh(r/2))^2 - 2 (\log \cosh(R/2))^2 \\
& \quad + \Li_2(\tanh^2(r/2)) - \Li_2(\tanh^2(R/2)) + c \log \frac{\cosh(r/2)}{\cosh(R/2)} 
\end{split}
\]
where $c=-4 \coth^2(R/2) \log \cosh(R/2)$, since the hyperbolic Laplacian of a function depending only on the geodesic distance  from the origin in the hyperbolic plane $\Hyp^2$ is $(\sinh r)^{-1}\big( (\sinh r) f^\prime(r) \big)^\prime$.
 
 \medskip

We have not succeeded in evaluating $u_{\Om^*}(r)$ on spherical or hyperbolic balls in dimension $N \geq 3$.

Integrating the above formulas for the deflection function yields the following explicit expressions for the mean deflection. 
\begin{proposition}[Maximal mean deflection under zero compression] \label{pr:maxvalue}
In the cases treated above, the maximum value of the mean deflection in \eqref{as04general} for the ball $\Om^*$ of radius $R$ is   
\[
\begin{split}
& \int_{\Om^*}  u_{\Om^*} \, dV \\
& = \begin{cases}
|\Sph^{N-1}| \, \frac{R^{N+4}}{N^2(N+2)^2 (N+4)} , & M=\RN, N \geq 1 ,\\
4\pi \big( \sin^2(R/2) - 4 \cot^2(R/2) (\log \cos(R/2))^2 \big) , & \text{for $M=\Sph^2$,} \\
4\pi \big( \sinh^2(R/2) - 4 \coth^2(R/2) (\log \cosh(R/2))^2 \big) , & \text{for $M=\Hyp^2$.} 
\end{cases}
\end{split}
\]
\end{proposition}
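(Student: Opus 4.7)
The plan is to evaluate each of the three integrals by direct computation, but via a different route in the curved cases than in the Euclidean one.

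In Euclidean space the deflection $u_{\Omega^*}(r)=\frac{R^4}{8N(N+2)}(1-r^2/R^2)^2$ is polynomial, so I would pass to polar coordinates and substitute $s=r/R$. The radial integral reduces to $\int_0^1(1-s^2)^2 s^{N-1}\,ds$, which via the further change $t=s^2$ is a Beta integral evaluating to $\frac{8}{N(N+2)(N+4)}$. Multiplying by the prefactor $R^4/[8N(N+2)]$, by $R^N$ from rescaling, and by $|\mathbb{S}^{N-1}|$ yields the stated value.

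For $\mathbb{S}^2$ and $\mathbb{H}^2$, integrating the $\Li_2$ formula for $u_{\Omega^*}$ directly against $\sin r$ or $\sinh r$ is feasible (using $\Li_2'(z)=-\log(1-z)/z$ together with integration by parts) but painful. I would instead exploit the energy-type identity
\[
\int_{\Omega^*} u_{\Omega^*}\,dV \;=\; \int_{\Omega^*} (\Delta u_{\Omega^*})^2\,dV ,
\]
obtained by two integrations by parts on $\int u\,\Delta^2 u\,dV$, with the boundary terms vanishing thanks to $u=\partial u/\partial n=0$ on $\partial\Omega^*$. The point is that $g:=\Delta u_{\Omega^*}$ can be put in closed form without ever invoking the polylogarithm: the equation $\Delta g=1$ integrates to $(\sin r)g'(r)=1-\cos r$ once the constant of integration is killed by regularity at $0$, whence $g'(r)=\tan(r/2)$ and
\[
g(r) = -2\log\cos(r/2) + B, \qquad B = -1 - 2\cot^2(R/2)\log\cos(R/2),
\]
on the sphere, and analogously $g(r)=2\log\cosh(r/2)+B'$ with $B'=1-2\coth^2(R/2)\log\cosh(R/2)$ on the hyperbolic plane. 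The value of $B$ is pinned down by the remaining boundary condition $u_{\Omega^*}'(R)=0$, applied after one further integration of $\Delta u_{\Omega^*}=g$.

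With $g$ in hand I would compute $\int_0^R g(r)^2\sin r\,dr$ (resp.\ $\sinh r\,dr$) via the substitution $w=\cos(r/2)$ (resp.\ $w=\cosh(r/2)$), which sends $\sin r\,dr$ to $-4w\,dw$ (resp.\ $\sinh r\,dr$ to $4w\,dw$) and reduces everything to the elementary integrals $\int w\log w\,dw$ and $\int w\log^2 w\,dw$. The one delicate step is algebraic bookkeeping: once the explicit value of $B$ is substituted, the linear-in-$\log\cos(R/2)$ terms get shifted into $\log^2$-terms through the $\log$-dependence of $B$, and all the $\log^2\cos(R/2)$-contributions combine via the identity $\cot^2(R/2)+1=\csc^2(R/2)$ to collapse into $-8\cot^2(R/2)\log^2\cos(R/2)$, while the constant piece reduces to $2\sin^2(R/2)$. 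Multiplying by $2\pi$ produces $4\pi\bigl(\sin^2(R/2)-4\cot^2(R/2)\log^2\cos(R/2)\bigr)$, and the hyperbolic case goes through identically with $\cos\leftrightarrow\cosh$ and appropriate sign changes.
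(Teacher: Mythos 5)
Your proposal is correct, and for the curved cases it takes a genuinely different (and cleaner) route than the paper. The paper proves Proposition~\ref{pr:maxvalue} simply by integrating the explicit formulas for $u_{\Om^*}$, which on $\Sph^2$ and $\Hyp^2$ involves the polylogarithm $\Li_2$, so the verification requires handling $\int_0^R \Li_2(\mp\tan^2(r/2))\,\sn r\,dr$-type integrals. You instead invoke the identity
\[
\int_{\Om^*} u_{\Om^*}\,dV = \int_{\Om^*} |\Delta u_{\Om^*}|^2\,dV,
\]
which is exactly the paper's own relation \eqref{deflectionpositive} with $\sigma=0$, and then note that $\Delta u_{\Om^*}$ solves $\Delta g = 1$ and so is merely an affine function of $\log\cos(r/2)$ (resp.\ $\log\cosh(r/2)$): the polylogarithm never appears. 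The remaining integral $\int_0^R g^2\,\sn r\,dr$ reduces under $w=\cos(r/2)$ (resp.\ $\cosh(r/2)$) to $\int w\log^k w\,dw$ for $k=0,1,2$. I checked your claimed values $B=-1-2\cot^2(R/2)\log\cos(R/2)$ and $B'=1-2\coth^2(R/2)\log\cosh(R/2)$ (obtained from $\int_0^R g\,\sn r\,dr=0$, which encodes the Neumann condition $u'_{\Om^*}(R)=0$), and the subsequent cancellation using $1+\cot^2=\csc^2$ (resp.\ $\coth^2-1=\csch^2$) does indeed produce $4\pi\bigl(\sin^2(R/2)-4\cot^2(R/2)\log^2\cos(R/2)\bigr)$ and its hyperbolic counterpart. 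The Euclidean computation via the Beta function also checks out. What your route buys is that one never needs to write down, let alone integrate, the full polylogarithmic solution; what the paper's route buys is that it also records the explicit deflection profile, which the paper wants for other purposes anyway.
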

Multiplying these deflection values by $-1/2$ yields the the minimal energy values. 

\subsection{Small positive compression}
When the lateral compression parameter $\sigma$ is sufficiently small, we are able to prove a fourth order St.\ Venant theorem in the planar Euclidean case. 
\begin{theorem}[Disk minimizes the energy, under compression] \label{as13}
Let $N = 2$.  There exists $\sigma_2>0$ such that if $\sigma \in [0, \sigma_2]$ and $\Om \subset \R^2$ is an open set with area equal to $\pi$, then every energy minimizing solution $u_\Om$ of \eqref{as01general}    satisfies
\[
\int_\Om u_\Om \, dV \le \int_{\Om^*}  u_{\Om^*} \, dV ,
\]
or equivalently $E(\Om,\sigma) \geq E(\Om^*,\sigma)$. 
Equality occurs if and only if \/$\Om$ coincides with a unit disk.
\end{theorem}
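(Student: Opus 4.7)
The plan is to perturb off Theorem \ref{as10general} (the case $\sigma=0$), exploiting continuity of the energy in $\sigma^2$, the rigidity of the ball at $\sigma=0$, and a compactness argument.

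\textbf{Uniform coercivity.} A standard integration by parts gives $\int_\Omega(\Delta v)^2\,dV = \int_\Omega \sum_{i,j} v_{ij}^2\,dV$ for every $v\in H^2_0(\Omega)$, and each coordinate derivative $\partial_i v$ belongs to $H^1_0(\Omega)$ since $v$ vanishes together with its gradient on $\partial\Omega$. Applying Poincar\'e to each $\partial_i v$ and using Faber--Krahn $\lambda_1(\Omega)\geq\lambda_1(\Omega^*)=:\lambda^*$, one obtains $\Lambda_1(\Omega)\geq\lambda^*$ uniformly over open $\Omega\subset\R^2$ of area $\pi$. For $\sigma^2\leq\lambda^*/2$ the operator is uniformly coercive; testing \eqref{as01general} weakly with $u_{\Omega,\sigma}$ and invoking Poincar\'e yields a uniform bound $\int_\Omega|\nabla u_{\Omega,\sigma}|^2\,dV\leq K$, depending only on $\pi$ and $\lambda^*$. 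Via the envelope identity $\partial E/\partial \sigma^2 = -\tfrac12\int_\Omega|\nabla u_{\Omega,\sigma}|^2\,dV$, this gives the uniform Lipschitz estimate $|E(\Omega,\sigma)-E(\Omega,0)|\leq K\sigma^2/2$.

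\textbf{Contradiction via compactness and rigidity.} Suppose, toward a contradiction, that there exist $\sigma_n\downarrow 0$ and open sets $\Omega_n$ of area $\pi$ with $E(\Omega_n,\sigma_n)<E(\Omega^*,\sigma_n)$. Using $u_{\Omega^*,0}$ as a test function gives the sharp upper bound $E(\Omega^*,\sigma_n)\leq E(\Omega^*,0)-c\sigma_n^2$ with $c=\tfrac12\int_{\Omega^*}|\nabla u_{\Omega^*,0}|^2>0$. Combining with the Lipschitz estimate for $\Omega_n$ and with Theorem \ref{as10general}, one deduces that the Saint-Venant deficit $E(\Omega_n,0)-E(\Omega^*,0)$ tends to zero. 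A concentration-compactness argument in the class of open sets of prescribed area (using $\gamma$-convergence adapted to the bilaplacian with $H^2$-capacity replacing $H^1$-capacity, and translating $\Omega_n$ to prevent escape to infinity) extracts a subsequence for which $\Omega_n$ converges to a limit $\Omega_\infty$ of area $\pi$ with $E(\Omega_\infty,0)\leq E(\Omega^*,0)$. The rigidity clause of Theorem \ref{as10general} identifies $\Omega_\infty$ with $\Omega^*$ up to $H^2$-capacity zero, so that $u_{\Omega_n,0}\to u_{\Omega^*,0}$ strongly in $H^2_0$ and $\int_{\Omega_n}|\nabla u_{\Omega_n,0}|^2\to 2c$. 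Expanding both sides of $E(\Omega_n,\sigma_n)<E(\Omega^*,\sigma_n)$ to second order in $\sigma_n^2$ and using this convergence yields the desired contradiction.

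\textbf{Main obstacle.} The delicate step is the compactness analysis: general sequences of open sets of prescribed area are not compact in any elementary topology, and one must work within a $\gamma$-convergence framework suited to the fourth-order operator (cf.\ \cite[Sections 3.3 and 3.8]{henrot-pierre}). A secondary subtlety is to quantify the decay rate of the Saint-Venant deficit $E(\Omega_n,0)-E(\Omega^*,0)$ relative to the second-order correction in $\sigma_n^2$, so that the correction does not swallow the deficit. This would be resolved cleanly by a quantitative stability version of Theorem \ref{as10general}---a fourth-order analogue of the Brasco--De Philippis--Velichkov type stability for the second-order Saint-Venant inequality---which does not appear to be available in the literature and which constitutes the true heart of the difficulty; the restriction to dimension $2$ is precisely where the planar Faber--Krahn-type uniform control on $\Lambda_1$ enters in a usable form.
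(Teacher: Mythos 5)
Your strategy is genuinely different from the paper's, and, as you yourself flag, it is incomplete in a way that is not a mere technical loose end but the true heart of the matter. Let me spell out precisely where it fails to close.

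First, even granting the compactness step, the final ``expand to second order'' move does not produce a contradiction. Writing $\delta_n = E(\Omega_n,0)-E(\Omega^*,0)\ge 0$ for the Saint-Venant deficit and using the envelope identity, your strict inequality $E(\Omega_n,\sigma_n)<E(\Omega^*,\sigma_n)$ reads
\[
\delta_n \;<\; \frac{\sigma_n^2}{2}\Bigl(\int_{\Omega_n}|\nabla u_{\Omega_n,0}|^2 - \int_{\Omega^*}|\nabla u_{\Omega^*,0}|^2\Bigr) \;+\; O(\sigma_n^4).
\]
Your compactness argument is designed to show the bracketed term and $\delta_n$ both tend to zero, but it gives no relation between their decay \emph{rates}; and the $O(\sigma_n^4)$ remainder, which involves the second $\sigma^2$-derivative of $E$ along the varying domains $\Omega_n$, is not uniformly controlled by anything you establish. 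Nothing you have rules out the bracket being positive and of order $\delta_n/\sigma_n^2$, so no contradiction is forced. You would need precisely the quantitative stability inequality you mention---a lower bound $\delta_n\gtrsim d(\Omega_n,\Omega^*)^\alpha$ together with a matching upper bound on the gradient discrepancy in terms of the same distance, with exponents that cooperate. No such fourth-order stability result is known. Second, the compactness/$\gamma$-convergence step itself is not available off the shelf: $\gamma$-convergence in $H^2_0$ for sequences of open sets of fixed area is delicate (extension domains, capacity normalization, and the fact that $\Omega_n$ may fragment or develop thin tentacles), and the passage from ``$\Omega_\infty$ equals $\Omega^*$ up to $H^2$-null sets'' to ``$u_{\Omega_n,0}\to u_{\Omega^*,0}$ strongly in $H^2$'' requires a Mosco-type convergence of the spaces $H^2_0(\Omega_n)$, which you have not argued. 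So the proposal contains two genuine missing ideas, not one, and neither is supplied.

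For contrast, the paper sidesteps both issues entirely by staying inside the one-dimensional reduced problem. It defines a compression-dependent two-ball energy $\mathcal E(a,\sigma)$, shows (Proposition \ref{as13.1}) that $\partial_a\mathcal E(a,\sigma)|_{a=1}=0$, proves (Lemma \ref{as13.2}) that in dimension $2$ the map $\mathcal E(\cdot,\sigma)$ converges in $C^2$ to $\mathcal E(\cdot,0)$ near $a=1$ as $\sigma\downarrow 0$ (this is where $N=2$ is used: $b=(1-a^N)^{1/N}$ is smooth at $a=1$ only when $N=2$), and combines this with the uniform convexity of $\mathcal E(\cdot,0)$ near $a=1$ (explicit from Section \ref{as11euclidean}) to conclude that $a=1$ is the minimizer for all small $\sigma$, with the remaining compact interval of $a$ handled by continuity. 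This is completely explicit, requires no shape-compactness machinery and no quantitative stability, and delivers the equality case. Your heuristic that a ``soft'' perturbation off $\sigma=0$ should work is correct in spirit, but the softness must be applied to the scalar function $a\mapsto\mathcal E(a,\sigma)$, not to the full class of open sets.
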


\begin{remark}
The compression threshold $\sigma_2$ will generally change when the volume constraint is changed because the scaling law for the energy, which says $E(\alpha\Omega,\sigma)=\alpha^{N+4}E(\Omega,\alpha\sigma)$ for $\alpha>0$, scales the compression parameter $\sigma$ too.  
\end{remark}

\begin{remark}
Unlike in Theorem \ref{as10general}, it is not necessary for Theorem \ref{as13} to say equality holds only up to sets of zero $H^2$-capacity. The difference is that when $N=2$, even a single point has positive $H^2$-capacity.
\end{remark}

\section{\bf Proof of Theorem \ref{as10general} --- constant load, zero compression}
\label{sec:as10generalproof}

\subsection*{The case $N=1$} If the open set $\Om \subset \R$ consists of finitely many open intervals then we may translate them to be adjacent, so that the union of their closures is exactly the interval $\Om^*$ together with its two endpoints. Similarly, if $\Om$ consists of infinitely many open intervals then we may reorder and translate them to be adjacent, say, ordered from left to right in decreasing order of size. This ordering is possible since for each $\e>0$, only finitely many intervals have length greater than $\e$. Reshuffling $\Omega$ in this fashion clearly does not change the energy $E(\Om,0)$ and ensures $\Om\subseteq \Om^*$.

The inclusion $H^2_0(\Om) \subseteq H^2_0(\Om^*)$ then implies the inequality $E(\Om, 0) \geq E(\Om^*, 0)$, by using $u_\Om$ as a trial function for $E(\Om^*, 0)$. If equality holds then $u_\Om = u_{\Om^*}$ by uniqueness of the minimizer, and since $u_{\Om^*}$ is positive (by \eqref{euclidexplicit} with $N=1$) it follows that $u_\Om$ vanishes only at the endpoints of $\Om^*$, and so $\Om$ must consist of a single interval. 

\subsection*{The case $N \geq 2$} The proof involves Talenti's comparison theorem and a two-ball auxiliary problem, as inspired by the solution of Rayleigh's conjecture for the fundamental tone of the vibrating clamped plate by Talenti \cite{Tal81}, Nadirashvili \cite{Nad95} and Ashbaugh--Benguria \cite{AshBen95}. Unlike in those earlier works, in this compliance question the auxiliary problem that we construct is not symmetric. This symmetry breaking takes advantage, essentially, of the fact that one does not expect large regions where the solution $u_\Om$ is negative. 

\subsubsection*{\sc Step 1 --- balls} Write $B_r \subset M$ for the ball of geodesic radius $r>0$ that is centered at the origin in Euclidean space, or centered at the north pole on the sphere, or centered in hyperbolic space at some point that from now on we regard as the origin.  

Write $B_R=\Om^*$ for the ball with which we wish to compare $\Omega$. The solution $u=u_\Om$ of \eqref{as01zero} is real analytic in a suitable coordinate chart (which in the spherical case we may take to be a global, stereographic chart) since the operator $\Delta^{\! 2}$ has real analytic coefficients and hence is analytically hypoelliptic \cite[Theorem 3.1]{Shimakura1992}. 

Decompose $u_\Om$ into positive and negative parts as $u_\Om= u_+-u_-$, denote the positive and negative nodal sets as $\Om_+=\{u_+>0\}$ and $\Om_-=\{u_->0\}$, and write $a,b \geq 0$ for the geodesic radii of the balls having the same volumes:
\[
V(\Om_+) = V(B_a), \qquad V(\Om_-)=V(B_b).
\]
Write 
\[
V=V(\Om_+)+V(\Om_-)=V(B_a)+V(B_b) .
\]
The set $\{ u = 0 \}$ has zero volume, because if $u$ vanished on a set of positive measure in some component of $\Om$ then  analyticity would force $u$ to vanish on the whole component  \cite{mityagin}, contradicting equation \eqref{as01zero}. Hence $V=V(\Om)=V(B_R)$. Thus in what follows, $a$ and $b$ are related always by the volume constraint $V(B_a)+V(B_b)=V(B_R)$. 

The critical set $\{ |\nabla u|=0 \}$ also has zero volume, since if $\nabla u$ vanished on a set of positive measure in some component of $\Om$ then analyticity would force it to vanish on the whole component, making $u$ constant there and again contradicting \eqref{as01zero}. 

\subsubsection*{\sc Step 2 --- elliptic rearrangement} Write $-F=(-\Delta u_+)^*$ on $B_a$ for the symmetric decreasing rearrangement of $-\Delta u_+$ restricted to $\Omega_+$, and write $-G=(-\Delta u_-)^*$ on $B_b$ for the symmetric decreasing rearrangement of $-\Delta u_-$ restricted to $\Omega_-$. The definition of symmetric decreasing rearrangement ensures equimeasurability even for signed functions; see for instance \cite{Kesavan2006}. In particular, $F$ and $G$ might change sign. 

This $F$ and $G$ are square-integrable, and are finite-valued and continuous (except perhaps at the centers of $B_a$ and $B_b$) since $\Delta u_\pm$ is continuous. Take radial functions $f \in H^1_0 \cap H^2$ and $g \in H^1_0 \cap H^2(B_b)$ that solve the Poisson equations  
\[
\Delta f = F \quad \text{ on } B_a , \qquad \Delta g = G \quad \text{ on } B_b ,
\]
noting that $f$ and $g$ are $C^2$-smooth (except perhaps at the centers of their respective balls). From a Talenti-style elliptic comparison result, Theorem \ref{talenti} in the Appendix, we know  $f$ and $g$ are symmetric decreasing as a function of the geodesic distance $r$ from the origin with $0 \leq u_+^* \le f$ and $0 \leq u_-^* \le g$. The theorem yields that $ \int_{\Om_+} u_+ \, dV \leq \int_{B_a} f \, dV$, and that if equality holds and $f$ is not identically zero then $\Om^+$ is a ball up to a set of measure zero. Hence  
\begin{align}
E(\Om, 0)
& = \frac{1}{2} \int_{\Om_+} |\Delta u_+|^2 \, dV  - \int_{\Om_+} u_+ \, dV +  \frac{1}{2} \int_{\Om_-} |\Delta u_-|^2 \, dV +\int_{\Om_-} u_- \, dV \notag \\
& \ge\frac{1}{2} \int_{B_a} |\Delta f|^2 \, dV - \int_{B_a} f \, dV + \frac{1}{2} \int_{B_b} |\Delta g|^2 \, dV , \label{eq:asymmetricgeneral}
\end{align}
where we simply dropped the nonnegative term $\int_{\Om_-} u_- \, dV $, thus ``breaking the symmetry'' of the functional with respect to $f$ and $g$. Note that $f$ and $g$ are constrained (or linked) by the requirement that $\int_{B_a} \Delta f \, dV = \int_{B_b} \Delta g \, dV$, as we now explain. Indeed, $\int_\Om \Delta u_\Om \, dV = \int_{\partial \Om} \frac{\partial u_\Om}{\partial n} \, dS = 0$ since $u_\Om \in H^2_0(\Om)$, and so $\int_{\Om_+} \Delta u_+ \, dV - \int_{\Om_-} \Delta u_- \, dV = 0$. Hence 
\begin{align*}
\int_{B_a} \Delta f \, dV 
  = \int_{B_a} F \, dV 
&  = \int_{\Om_+} \Delta u_+ \, dV \\
& = \int_{\Om_-} \Delta u_- \, dV = \int_{B_b} G \, dV = \int_{B_b} \Delta g \, dV ,
\end{align*} 
as claimed. These integrals are all nonpositive, since $\int_{B_a} \Delta f \, dV$ equals the integral of the normal derivative of $f$ over the boundary of the ball, and that derivative is $\leq 0$ because $f$ is radially decreasing. 

\subsubsection*{\sc Step 3 --- two-ball problem} Inspired by expression \eqref{eq:asymmetricgeneral}, we introduce a two-ball auxiliary problem for $a \in [0,R]$: 
\begin{equation}
\label{twoballenergygeneral}
{\mathcal E}(a) = \min \left( \frac{1}{2} \int_{B_a} |\Delta f|^2 \, dV - \int_{B_a} f \, dV + \frac{1}{2} \int_{B_b} |\Delta g|^2 \, dV \right) 
\end{equation}
where the minimum is taken over ``admissible'' pairs $(f,g)$, meaning: 
\begin{align}
f \in H^1_0 \cap H^2(B_a), & \qquad g \in H^1_0 \cap H^2(B_b) ,  \label{constraintgeneral.11} \\
\int_{B_a} \Delta f \, dV & = \int_{B_b} \Delta g \, dV \leq 0 . \label{constraintgeneral}
\end{align}
When $a=0$, one omits $f$ from the energy functional in \eqref{twoballenergygeneral} and in that case the minimizer is obviously $g \equiv 0$ with ${\mathcal E}(0)=0$. When $a=R$, one omits $g$ from the functional and the constraint \eqref{constraintgeneral} is replaced by $\int_{B_R} \Delta f \, dV = 0$, which implies $f \in H^2_0(B_R)$ and so ${\mathcal E}(R) = E(B_R, 0) < 0$. 

Let us justify that this auxiliary problem, namely the minimization of  \eqref{twoballenergygeneral} under assumptions \eqref{constraintgeneral.11}--\eqref{constraintgeneral}, does indeed have a unique minimizer $(f,g)$. In Euclidean space, existence of a minimizer follows by the usual direct method, in view of the coercivity inequality
\begin{equation} \label{eq:Grisvard}
\int_{B_a}|\Delta f|^2 \, dV \geq \int_{B_a}|D^2 f|^2 \, dV \geq C_a \lVert f \rVert^2_{H^2(B_a)} , \qquad f \in H^2 \cap H^1_0(B_a) ,
\end{equation}
(and the analogous inequality for $g$), which we now explain. The first inequality in \eqref{eq:Grisvard} is a consequence of the following simple computation (see Grisvard \cite[Chapter 3]{Gr85}) in which partial derivatives are denoted by subscripts, the Einstein convention is used, and integrals over the boundary are taken with respect to surface measure: 
\begin{align*}
\int_{B_a} |\Delta f|^2 = \int_{B_a} f_{ii}f_{jj} 
& = \int_{\partial B_a} f_{ii}f_j n_j-\int_{B_a} f_{iij}f_j \\
& =\int_{\partial B_a} f_{ii}f_j n_j-\int_{\partial B_a} f_{ij}f_jn_i +  \int_{B_a} f_{ij}f_{ij} \\ 
& = \int_{\partial B_a} \! \Big(\frac{\partial f}{\partial n} \Big)^{\! \! 2} \mathcal H + \int_{B_a} |D^2 f|^2 ,
\end{align*}
where $\mathcal H \geq 0$ is the mean curvature of the boundary of $B_a$. (The chain of equalities should be proved first for smooth $f$ and then extended to general $f \in H^2 \cap H^1 _0(B_a)$. See also \cite{adolfsson} for more general results in this respect.) The second inequality in \eqref{eq:Grisvard} follows from standard Poincar\'{e}  inequalities, using that $\int_{B_a} f_i = 0$ when $f \in H^1_0(B_a)$. 

Existence of a minimizing pair $(f,g)$ in the spherical and hyperbolic cases can be justified similarly, after transforming the integrals of $|\Delta f|^2$ and $|D^2 f|^2$ on the geodesic ball $B_a$ to integrals on a Euclidean ball, via a global coordinate chart on $B_a$. Additional lower order terms arise in the integrals due to the expression for the Laplacian in the coordinate system (see \cite[Chapter 2]{Hebey1996}) and also due to the change of variable (Jacobian) factors in the integrals; these lower order terms can be absorbed in the coercivity estimate, so that again $\int_{B_a}|\Delta f|^2 \, dV \geq C_a \lVert f \rVert^2_{H^2(B_a)}$ as needed for existence of a minimizing pair.

The minimizing pair is unique, in view of the strict convexity of the integrands in the energy functional. Uniqueness implies that $f$ and $g$ are radial on their respective balls. 

Observe that the two-ball energy can be evaluated explicitly at the endpoints $a=0$ and $a=R$:  
\[
{\mathcal E}(0)=0 > E(B_R, 0) = {\mathcal E}(R) .
\] 
We will show in Step 4 that ${\mathcal E}(a)$ is minimal at the right endpoint $a=R$. Hence
\[
E(\Om, 0) \ge {\mathcal E}(a) \ge {\mathcal E}(R)= E(B_R, 0) ,
\]
which proves the minimality of the ball in Theorem \ref{as10general}. 

First we show the minimizing $f$ and $g$ are smooth and radially decreasing. 
\begin{proposition} \label{pr:eulerlagrange}
The minimizers $f$ and $g$ for the two-ball problem \eqref{twoballenergygeneral} under constraints \eqref{constraintgeneral.11}---\eqref{constraintgeneral} satisfy the Euler--Lagrange equations
\begin{equation} \label{eulerlagrange}
\Delta^{\! 2} f = 1 \ \text{in $B_a$} \qquad \text{and} \qquad \Delta^{\! 2} g = 0 \ \text{in $B_b$}
\end{equation}
and hence are smooth and radially strictly decreasing. 
\end{proposition}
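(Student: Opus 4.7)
The plan is to derive the Euler--Lagrange system for the constrained minimization \eqref{twoballenergygeneral}--\eqref{constraintgeneral}, extract smoothness via elliptic regularity, and verify strict radial monotonicity through an explicit ODE calculation.

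Introduce a Lagrange multiplier $\lambda \in \R$ for the equality constraint $\int_{B_a}\Delta f\,dV = \int_{B_b}\Delta g\,dV$ (if the inequality $\le 0$ is saturated at the minimizer, an additional KKT multiplier can be absorbed into $\lambda$ without altering what follows). Taking admissible variations $\varphi \in H^2\cap H^1_0(B_a)$ and $\psi \in H^2\cap H^1_0(B_b)$ yields the weak identity
$$\int_{B_a}\Delta f\,\Delta\varphi\,dV - \int_{B_a}\varphi\,dV + \int_{B_b}\Delta g\,\Delta\psi\,dV = \lambda\left(\int_{B_a}\Delta\varphi\,dV - \int_{B_b}\Delta\psi\,dV\right).$$
Localizing with $\psi\equiv 0$ and $\varphi\in C^\infty_c(B_a)$ gives $\Delta^2 f = 1$ distributionally on $B_a$, and symmetrically $\Delta^2 g = 0$ on $B_b$. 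Once interior elliptic regularity upgrades $f,g$ to $H^4_{\mathrm{loc}}$, allowing variations with nontrivial $\partial\varphi/\partial n$ on $\partial B_a$ and integrating by parts extracts the natural boundary conditions $\Delta f|_{\partial B_a} = \lambda$ and $\Delta g|_{\partial B_b} = -\lambda$ alongside the essential Dirichlet conditions already encoded in $H^1_0$.

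Smoothness up to the boundary then follows from the classical Agmon--Douglis--Nirenberg theory for the resulting fourth-order elliptic boundary value problem with constant right-hand side and constant natural boundary data on the smooth geodesic ball, transplanted through a normal coordinate chart in the curved cases, as was already done for the coercivity estimate. Radiality of $(f,g)$ is inherited from the strict-convexity uniqueness noted in the text together with rotational invariance of the functional and the constraints.

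For strict monotonicity, reduce the Euler--Lagrange equations to radial ODEs. In Euclidean space, $\Delta^2 f = 1$ and regularity at the origin force $\Delta f(r) = r^2/(2N) + c_1$ with $c_1$ fixed by $\Delta f(a) = \lambda$, and a second radial integration with $f(a) = 0$ produces an explicit quartic. The analysis for $g$ is simpler: $\Delta g$ is a radial harmonic function, hence constantly $-\lambda$, so $g$ is a radial quadratic with $g(b) = 0$. The multiplier $\lambda$ is pinned down by the linking constraint $f'(a)\,|\partial B_a| = g'(b)\,|\partial B_b|\le 0$ coming from the divergence theorem, and a direct differentiation verifies $f'(r)<0$ on $(0,a]$ and $g'(r)<0$ on $(0,b]$; the spherical and hyperbolic cases proceed identically with the appropriate radial form of the Laplacian. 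The main subtlety is confirming the sign of $\lambda$: the radial ODE admits both decreasing and increasing profiles at $r=0$, and it is the inequality constraint $\int_{B_a}\Delta f\le 0$ that selects the strictly decreasing branch. Beyond this sign bookkeeping, the proof is a routine combination of constrained Lagrange multiplier extraction, elliptic regularity bootstrap, and explicit radial ODE integration.
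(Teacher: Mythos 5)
Your proposal follows the same broad route as the paper: extract the Euler--Lagrange system by variations, invoke elliptic regularity for smoothness, inherit radiality from uniqueness, and derive monotonicity from the radial biharmonic ODE. Two steps deserve closer attention, however. First, the claim that the spherical and hyperbolic cases ``proceed identically with the appropriate radial form of the Laplacian'' is too optimistic if you mean explicit closed forms: for $N \geq 3$ in the curved spaces these are not elementary (the paper itself remarks it has no closed formula there). What you actually need is a single radial integration of $\Delta(\Delta f)=1$, namely
\[
(\sn r)^{N-1}\,(\Delta f)'(r) = \int_0^r (\sn s)^{N-1}\,ds > 0 ,
\]
which shows $\Delta f$ is strictly increasing in $r$ without ever solving the ODE in closed form; the paper reaches the same conclusion by passing to the variable $t$ with $dt=(\sn r)^{1-N}dr$, turning subharmonicity into convexity. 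Second, your account of how the constraint forces $f$ to decrease is imprecise: you attribute it to ``the sign of $\lambda$,'' but the natural boundary value $\Delta f|_{\partial B_a}=\lambda$ need not be nonpositive. The constraint controls the integral $\int_{B_a}\Delta f\,dV\le 0$, not the boundary trace, and $\Delta f$ may well be positive near $\partial B_a$. The missing link is the observation, spelled out in the paper, that a radially increasing $\Delta f$ with nonpositive total integral necessarily has $\int_{B_r}\Delta f\,dV<0$ for every $r\in(0,a)$: either $\Delta f<0$ throughout, or it changes sign once from negative to positive, in which case deleting the outer positive annulus only makes the integral smaller. The divergence theorem then yields $\partial f/\partial r<0$ on all of $(0,a)$, not just near the outer boundary. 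Once these two steps are filled in, your Lagrange-multiplier derivation of the Euler--Lagrange equations with natural boundary data and the harmonic-constant argument for $g$ line up with the paper's proof.
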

\begin{proof}
The Euler--Lagrange equations follow immediately from the energy formula \eqref{twoballenergygeneral}. Elliptic regularity then ensures smoothness of $f$ and $g$ on their respective balls. 

We will show that they are radially decreasing. The equation $\Delta (\Delta f)=1>0$ means the radial function $\Delta f$ is strictly subharmonic. Expressing the Laplacian in the radial coordinate, we find  
\[
\frac{1}{(\sn r)^{N-1}} \frac{d\ }{dr} \! \left( (\sn r)^{N-1} \frac{d\ }{dr} \Delta f \right) > 0 
\]
 where 
 \[
\sn r = 
\begin{cases}
\sinh r & \text{when $M = \Hyp^N$,} \\
r & \text{when $M = \R^N$,} \\
\sin r & \text{when $M = \Sph^N$.}
\end{cases}
\]
 After multiplying the equation by $(\sn r)^{2(N-1)}$, it says
 \[
 \frac{d^2\ }{dt^2} \,  \Delta f > 0 
 \]
where the new variable $t$ is determined by
\[
\frac{d\ }{dt} = (\sn r)^{N-1} \frac{d\ }{dr} .
\]
This $t$ is a strictly increasing function of $r$, and $t \to -\infty$ as $r \to 0$ since $dt/dr \simeq r^{1-N} \geq r^{-1}$ for small $r$. 

We have shown $\Delta f$ is a strictly convex function of $t$. It tends to a finite limit as $t \to -\infty$, namely the value of $\Delta f$ at the origin, and so $\Delta f$ is necessarily strictly increasing as a function of $t$ and hence of $r \in (0,a)$. Further, $\int_{B_a} \Delta f \, dV \leq 0$ by the constraint \eqref{constraintgeneral}. Combining these two properties, we deduce that $\int_{B_r} \Delta f \, dV < 0$ for all $r \in (0,a)$; here one uses that either $\Delta f$ is negative for all $r \in (0,a)$ or else $\Delta f$ changes from negative to positive at some radius. 

Then the divergence theorem implies $\partial f/\partial r<0$, so that $f$ is strictly radially decreasing. 

The argument is easier for $g$: the equation $\Delta (\Delta g)=0$ means the smooth radial function $\Delta g$ is harmonic and hence is constant. The constant is $\leq 0$ due to the constraint \eqref{constraintgeneral} and so $\int_{B_r} \Delta g \, dV \leq 0$ for all $r \in (0,a)$, so that $\partial g/\partial r \leq 0$ and $g$ is radially decreasing. 
\end{proof}

\begin{remark}
An alternative proof that $f$ and $g$ are radially decreasing (at least in the Euclidean setting) can be based on an argument of Nadirashvili (see \cite[equation (3.12)]{Nad95}). The idea is to consider a competitor $\tilde{f}$ for $f$ built in such a way that $\partial_r\tilde{f}=-|\partial_r f|$. This procedure corresponds to flipping the graph of $f$ at the points where its derivative vanishes. In this fashion, we immediately get that $\tilde{f}$ is decreasing. Besides, not only $|\Delta f|=|\Delta \tilde{f}|$, but also $\tilde{f}\geq|f|$, so that replacing $f$ with $\tilde{f}$ does not increase the energy \eqref{twoballenergygeneral}. Lastly, $\partial_n\tilde{f}=-|\partial_n f|=\partial_n f$, the last equality coming from the sign assumption in \eqref{constraintgeneral}. As a result $(\tilde{f},g)$ is admissible, and so it is a minimizing pair for the problem \eqref{twoballenergygeneral}--\eqref{constraintgeneral}. By uniqueness, $f=\tilde{f}$ must be decreasing. Of course, the same approach can be applied for $g$. The fact that $f$ and $g$ are not locally constant follows from their analyticity, and hence $f$ and $g$ are strictly decreasing.
\end{remark}

\subsubsection*{\sc Step 4 --- minimizing the two-ball energy by transplantation} In order to show that the two-ball energy attains its minimum at $a=R$, we develop an argument based on  mass transplantation in which we enlarge the $a$-ball, shrink the $b$-ball, and show that the energy decreases. 

Namely, we fix $0 < a < R$ and prove that either ${\mathcal E}(a)>{\mathcal E}(R)$ or else  
\[
{\mathcal E}(a) > {\mathcal E}(a+\e) 
\]
for all small $\e >0$. Since ${\mathcal E}$ is  obviously continuous on $[0,R]$  (because the solutions of the Euler--Lagrange equation \eqref{eulerlagrange}, when rescaled over unit balls, solve an equation whose data is continuous with respect to $a$), this argument implies that ${\mathcal E}(a)$ is strictly minimal at $a=R$. 

Denote the pair of minimizers for the two-ball energy ${\mathcal E}(a)$ by $(f,g)$. Recall $f$ and $g$ are smooth, and are  radially decreasing and nonnegative and equal zero at the boundary. 

Case (i). First suppose $\int_{B_b} \Delta g \, dV = 0$, so that $\int_{B_a} \Delta f \, dV = 0$ by the constraint and hence $f \in H^2_0(B_a)$. Minimality of the pair $(f,g)$ in the definition of ${\mathcal E}(a)$ implies that $g \equiv 0$ on $B_b$, and so 
\begin{align*}
{\mathcal E}(a) 
& = \frac{1}{2} \int_{B_a} |\Delta f|^2 \, dV - \int_{B_a} f \, dV \\
& \geq  E(B_a,0) \\
& > E(B_R,0) = {\mathcal E}(R) 
\end{align*}
by strict domain monotonicity. Thus ${\mathcal E}(a)>{\mathcal E}(R)$, as we wanted to show. 

Case (ii). Suppose $\int_{B_b} \Delta g \, dV < 0$. Choose $\delta>0$ small enough that the integral remains negative whenever a set $S$ of measure less than $\delta$ is removed from the ball:  
\[
|S| < \delta \quad \Longrightarrow \quad \int_{B_b \setminus S} \Delta g \, dV < 0.
\] 
Given $0<\e<R-a$, choose $\e^\prime>0$ such that $V(B_{a+\e})+V(B_{b-\e^\prime})=V(B_R)$. Require $\e$ to be small enough that the annulus $B_b \setminus \ov B_{b-\e^\prime}$ has measure less than $\delta$. 

For the transplantation step, we will construct an admissible pair of functions $f_\e$ and $g_\e$ on $B_{a+\e}$ and $B_{b-\e^\prime}$, respectively. Start by letting 
\[
g_\e = g-g(b-\e^\prime) \qquad \text{on $B_{b-\e^\prime}$.} 
\]
Notice $g_\e$ equals zero at $r=b-\e^\prime$ and belongs to $H^1_0 \cap H^2(B_{b-\e^\prime})$, with $\Delta g_\e = \Delta g$. 

Next we construct $f_\e$ on the ball $B_{a+\e}$. Rearrange the values of $\Delta g$ on the annulus $B_b \setminus \ov B_{b-\e^\prime}$ to obtain a radially symmetric function $G^{**}$ on the annulus $B_{a+\e}\setminus \ov B_a$, with the rearrangement carried out in any manner that preserves the volume (measure) of superlevel sets. Write $h_\e$ for the weak solution of the Poisson equation 
\[
\Delta h_\e = -G^{**} \qquad \text{on the annulus $B_{a+\e}\setminus \ov B_a$}
\]
with Neumann condition $\partial h_\e/\partial r = \partial f/\partial r $ at the inner boundary $r=a$ and Dirichlet condition $h_\e=0$ at the outer boundary $r=a+\e$. Note $h_\e$ is radially symmetric since $G^{**}$ and the boundary conditions are radially symmetric. Define a radial function 
\[
f_\e
= \begin{cases}
f+h_\e(a) & \text{on $B_a$,} \\
h_\e & \text{on $B_{a+\e} \setminus B_a$,}
\end{cases}
\]
and observe $f_\e \in H^1_0 \cap H^2(B_{a+\e})$, noting that the values and slopes of $f_\e$ match from the left and right at radius $a$. The Laplacian is 
\[
\Delta f_\e 
= \begin{cases}
\Delta f & \text{on $B_a$,} \\
- G^{**} & \text{on $B_{a+\e} \setminus \ov{B_a}$} . 
\end{cases} 
\]
The admissibility constraint holds since 
\begin{align*}
\int_{B_{a+\e}} \Delta f_\e \, dV 
& = \int_{B_a} \Delta f \, dV - \int_{B_{a+\e}\setminus \ov B_a} G^{**} \, dV\\
& = \int_{B_b} \Delta g \, dV - \int_{B_b \setminus \ov B_{b-\e^\prime}} \Delta g \, dV \\
& = \int_{B_{b-\e^\prime}} \Delta g \, dV
\end{align*}
by the admissibility of $(f,g)$ and equimeasurability of $\Delta g$ and its rearrangement $G^{**}$. 

Now that the pair $(f_\e, g_\e)$ is known to be admissible, we obtain from the definition of the energy as a minimum that  
\begin{equation}
{\mathcal E}(a+\e) \leq \frac{1}{2} \int_{B_{a+\e}} |\Delta f_\e|^2 \, dV - \int_{B_{a+\e}} f_\e \, dV + \frac{1}{2} \int_{B_{b-\e^\prime}} |\Delta g_\e|^2 \, dV . \label{aplusepsilongeneral}
\end{equation}
On the right side, the transplantation procedure ensures that 
\[
\int_{B_{a+\e}} |\Delta f_\e|^2 \, dV+ \int_{B_{b-\e^\prime}} |\Delta g_\e|^2 \, dV = \int_{B_a} |\Delta f|^2 \, dV  +  \int_{B_b} |\Delta g|^2 \, dV .
\]
To handle the remaining term on the right, we will show  
\begin{equation} \label{eq:fstrict}
\int_{B_{a+\e}} f_\e \, dV > \int_{B_a} f \, dV ,
\end{equation}
so that \eqref{aplusepsilongeneral} implies ${\mathcal E}(a+\e) < {\mathcal E}(a)$, concluding the proof of Step 4. 

To show \eqref{eq:fstrict}, it suffices to show $h_\e$ is radially decreasing and positive on the annulus $B_{a+\e}\setminus \ov B_a$, because then $f_\e$ is an upward shift of $f \geq 0$. Let $c \in (a,a+\e)$. Write $P$ for perimeter. By integrating $\Delta h_\e$ over the annulus $\{ a < r < c \}$, we find 
\begin{align*}
& \frac{\partial h_\e}{\partial r} (c) \, P(\partial B_c) \\
& = \frac{\partial f}{\partial r} (a) \, P(\partial B_a) - \int_{B_{c} \setminus \ov B_a} G^{**} \, dV \quad \text{since $\Delta h_\e=-G^{**}$} \\
& = \int_{B_a} \Delta f \, dV - \int_S \Delta g \, dV \quad \text{where $S \subset B_b \setminus \ov B_{b-\e^\prime}$ is some suitable set} \\
& = \int_{B_b} \Delta g \, dV - \int_S \Delta g \, dV \qquad \text{by the constraint on $f$ and $g$} \\
& < 0 ,
\end{align*}
using that $S \subset B_b \setminus \ov B_{b-\e^\prime}$ has measure less than $\delta$. Thus $\partial h_\e / \partial r < 0$ at $c$ and so $h_\e$ is radially decreasing, which implies positivity since it was constructed to equal $0$ at $r=a+\e$. 

\subsubsection*{\sc Step 5 --- uniqueness of the minimizing domain} 
Suppose equality holds in Theorem \ref{as10general}. In view of the strict minimality of ${\mathcal E}(\cdot)$ at the right endpoint in Step 4 above, we must have $a=R$. Hence $\Omega_-$ is empty and $V(\Omega_+)=V(\Omega)$. In addition, equality must hold in Step 2 when Talenti's theorem is applied, and so $\Omega_+$ equals a geodesic ball $B$ up to a set of measure zero. (We may center the ball so that $B=B_R$.) Since $\Omega$ is open and $\Omega \setminus \overline{B}$ has measure zero, it follows that $\Omega \sq B$ and $B \setminus \Omega$ has measure zero. 

%First, it is easy to show that $\Om\subseteq B$ pointwise. Indeed, $\Om \setminus \overline{B}$ is a negligible open set, hence it is empty, or in other words $\Om\subseteq \overline{B}$. But the interior of $\overline{B}$ is $B$, therefore $\Om\subseteq B$.

We go further and prove $B \setminus \Omega$ has $H^2$-capacity zero. After extending $u_\Om$ by $0$ to the rest of $B$, we see it is not only in $H_0^2(B)$ but is a minimizer of $E(B,0)$, due to the equality $E(\Om,0)=E(B,0)$. By uniqueness of this minimizer, we have that $u_\Om=u_B$ as $H^2$ functions, that is, up to a set of $H^2$-capacity zero. Since $u_B>0$ on $B$ by the proof of Proposition \ref{pr:eulerlagrange},
% CANNOT rely on \eqref{euclidexplicit} here, because that formula is only for the Euclidean case
we conclude that up to an $H^2$-null set one has  
\[
B = \{u_B>0\}=\{u_\Om>0\}\subseteq\Omega, 
\]
the last inclusion coming from the fact that $u\in H_0^2(\Omega)$. Thus, up to the removal of a set of $H^2$-capacity zero, the ball is the only optimizer for Theorem \ref{as10general}.

\section{\bf Alternative (partial) proof of Theorem \ref{as10general} by explicit formulas}
\label{sec:explicitproof}

In this section we sketch a way to prove Theorem \ref{as10general} in the Euclidean case, and the $2$-dimensional spherical and hyperbolic cases, by explicitly evaluating the two-ball energy and then minimizing it. We have not found a similarly explicit proof in the higher dimensional spherical or hyperbolic situations. In any event, we think the transplantation method in Section \ref{sec:as10generalproof} is more illuminating than the following explicit approach, even with the appealingly exact formulas below.  

We state only the key formulas and leave proofs to the interested reader. 

\subsection{Euclidean case ($M=\RN, \sigma=0$)}
\label{as11euclidean}

In the Euclidean case, a direct computation using the Euler--Lagrange conditions \eqref{eulerlagrange} leads to the formulas
\[
f(r) = c \frac{a^2 - r^2}{2N} - \frac{a^4 - r^4}{8N(N+2)} , \qquad g(r) = d \frac{b^2 - r^2}{2N} .
\]
The constraint equation \eqref{constraintgeneral} enables us to find the constants $d$ in terms of $c$ as
\[
d = \frac{2(N+2)a^N c -a^{N+2}}{2(N+2)b^N} . 
\]
The choice of $c$ that minimizes the energy of $f$ and $g$ in \eqref{twoballenergygeneral} is  
\[
c = \frac{a^2 (N R^N+2(R^N-a^N))}{2N(N+2)R^N} , 
\]
where we used that $a^N+b^N=R^N$ by the volume normalization. (Recall $R$ is the radius of the ball $\Omega^*$.) Hence one may compute the two-ball energy as  
\[
{\mathcal E}(a) = - \frac{|\Sph^{N-1}|}{2} \frac{2 (N+2) R^N a^{N+4} -  (N+4) a^{2(N+2)}}{N^3 (N+2)^2 (N+4)R^N} .
\]
This energy has negative derivative with respect to $a$, since 
\begin{equation} \label{eq:energyderiv}
{\mathcal E}'(a)= - |\Sph^{N-1}| \, \frac{R^N-a^N}{N^3 (N+2)R^N} a^{N+3} <0 , \qquad a \in (0,R) , 
\end{equation}
and this direct estimate provides an alternative proof of Theorem \ref{as10general} in the Euclidean case. 

The minimum value of the energy is 
\[
E(\Omega^*,0) = {\mathcal E}(R) = - \frac{|\Sph^{N-1}|}{2} \, \frac{R^{N+4}}{N^2(N+2)^2 (N+4)} .
\]

\subsection{$2$-dim spherical case ($M=\Sph^2, \sigma=0$)}
\label{as11spherical}

On the sphere, the formulas that result from computing the two-ball energy minimizers explicitly are considerable more complicated than in the Euclidean case. We sketch below the results that can be obtained, mostly to convince the reader that such calculations are not the most insightful way to prove Theorem \ref{as10general} outside of Euclidean spaces. 

Consider the spherical case in $2$-dimensions ($N=2$). By solving the Euler--Lagrange equations \eqref{eulerlagrange} on spherical caps (geodesic disks), one can obtain that 
\[
\begin{split}
f(r) 
& = 2 (\log \cos(r/2))^2 - 2 (\log \cos(a/2))^2 \\
& \quad + \Li_2(-\tan^2(r/2)) - \Li_2(-\tan^2(a/2)) \\
& \quad + c \log \frac{\cos(r/2)}{\cos(a/2)} , \\
g(r) & = d \log \frac{\cos(r/2)}{\cos(b/2)} ,
\end{split}
\]
where $\Li_2(z)=\sum_{k=1}^\infty z^k/k^2$ is the polylogarithm function. The constraint equation \eqref{constraintgeneral} yields the constant $d$ in terms of $c$ as
\[
d = \frac{\sin^2(a/2) c - 4 \cos^2(a/2) \log \cos(a/2)}{\sin^2(b/2)} .
\]
The choice of $c$ that minimizes the energy of $f$ and $g$ in \eqref{twoballenergygeneral} is  
\[
c =  \frac{\cos R - \cos a + 2(1+\cos R) \log \cos(a/2)}{\sin^2(R/2)} ,
\]
where we used that $\sin^2(a/2)+\sin^2(b/2)=\sin^2(R/2)$ by the volume normalization for the $2$-sphere. The two-ball energy can then be evaluated as 
\[
\begin{split}
{\mathcal E}(a) 
& = - \frac{\pi}{4 \sin^2(R/2)}  \big( 1 - \cos(2a) - 4 (1 - \cos a) \cos R \\
& + 16 (\cos a - \cos R) \log \cos (a/2) - 16 (1 + \cos R) (\log \cos(a/2))^2 \big) . 
\end{split}
\]
This energy has derivative 
\[
{\mathcal E}'(a)= \pi \frac{\cos a - \cos R}{\sin^2(R/2)} \big( 1 - \cos a + 4 \log \cos(a/2) \big) \tan (a/2) .
\]
Note that ${\mathcal E}^\prime(a)<0$ when $a \in (0,\pi)$, because the middle factor $1 - \cos a + 4 \log \cos(a/2)$ is negative (it has value $0$ at $a=0$ and its derivative is $\sin a - 2 \tan(a/2) < 0$). Thus Theorem \ref{as10general} has been proved explicitly for the $2$-dimensional spherical case. 

The minimum value of the energy in this spherical case is 
\[
E(\Omega^*,0) = {\mathcal E}(R) = 2\pi \big( 4 \cot^2(R/2) (\log \cos(R/2))^2 -\sin^2(R/2) \big)
\]
where $R$ is the radius of the geodesic disk $\Omega^*$ on the $2$-sphere.

\subsection{$2$-dim hyperbolic case ($M=\Hyp^2, \sigma=0$)}
\label{as11hyperbolic}
For the $2$-dimensional hyperbolic space $\Hyp^2$, the formulas are analogous to the spherical case and so we simply state them below: 
\[
\begin{split}
f(r) 
& = 2 (\log \cosh(r/2))^2 - 2 (\log \cosh(a/2))^2 \\
& \quad + \Li_2(\tanh^2(r/2)) - \Li_2(\tanh^2(a/2)) \\
& \quad + c \log \frac{\cosh(r/2)}{\cosh(a/2)} , \\
g(r) & = d \log \frac{\cosh(r/2)}{\cosh(b/2)} ,
\end{split}
\]
\[
\begin{split}
d & = \frac{\sinh^2(a/2) c + 4 \cosh^2(a/2) \log \cosh(a/2)}{\sinh^2(b/2)} , \\
c & = - \frac{\cosh R - \cosh a + 2(1+\cosh R) \log \cosh(a/2)}{\sinh^2(R/2)} ,
\end{split}
\]
\[
\begin{split}
{\mathcal E}(a) 
& = \frac{\pi}{4 \sinh^2(R/2)}  \big( 1 - \cosh(2a) - 4 (1 - \cosh a) \cosh R \\
& + 16 (\cosh a - \cosh R) \log \cosh (a/2) - 16 (1 + \cosh R) (\log \cosh(a/2))^2 \big) , \\
{\mathcal E}'(a)& = \pi \frac{\cosh R - \cosh a}{\sinh^2(R/2)} \big( 1 - \cosh a + 4 \log \cosh(a/2) \big) \tanh (a/2) .
\end{split}
\]
Again ${\mathcal E}^\prime(a)<0$ when $a>0$, because the middle factor $1 - \cosh a + 4 \log \cosh(a/2)$ is negative (it has value $0$ at $a=0$ and its derivative is $-\sinh a + 2 \tanh(a/2) < 0$). Thus Theorem \ref{as10general} follows for the $2$-dimensional hyperbolic case. 

The minimum value of the energy in this hyperbolic case is 
\[
E(\Omega^*,0) = {\mathcal E}(R) = 2\pi \big( 4 \coth^2(R/2) (\log \cosh(R/2))^2 -\sinh^2(R/2) \big) .
\]

\section{\bf Proof of Theorem \ref{as13} --- positive compression in Euclidean plane}
\label{sec:compression}

We work in $N$ dimensions, in order to identify the step where the proof breaks down for $N \geq 3$. Assume $V(\Omega)=|B_1|$, so that $R=1$ in what follows. 

\subsection*{Two-ball energy ${\mathcal E}(a, \sigma)$}
By analogy with the zero compression case in Section \ref{sec:as10generalproof}, and again relying on the Talenti-style result Theorem \ref{talenti}, let us denote by ${\mathcal E}(a, \sigma)$ the energy for the following two-ball problem with ball radius $a \in (0,1)$ and compression parameter $\sigma>0$:
\begin{align*}
{\mathcal E}(a, \sigma) = \min \Big ( \frac{1}{2} \int_{B_a} |\Delta f|^2  \, dV 
& - \frac{\sigma ^2}{2} \int_{B_a} |\nabla f|^2 \, dV - \int_{B_a} f \, dV \\
& + \frac{1}{2} \int_{B_b} |\Delta g|^2 \, dV - \frac{\sigma ^2}{2} \int_{B_b} |\nabla g|^2 \, dV\Big)
\end{align*}
where the minimum is taken over ``admissible'' pairs $(f,g)$, meaning: 
\begin{align*}
f \in H^1_0 \cap H^2(B_a), & \qquad g \in H^1_0 \cap H^2(B_b) , \\
a^{N-1} \left. \frac{\partial f }{\partial n} \right|_{\partial B_a} &  =  b^{N-1} \left. \frac{\partial g}{\partial n} \right|_{\partial B_b}= \text{constant}. 
\end{align*}

We start with the following observations for the resolution of a generic problem. Let $r >0$ and suppose $0 < \sigma^2 < \Lambda_1(B_r)=$\, buckling load of the ball (this buckling load scales as $\Lambda_1(B_r)=r^{-2}\Lambda_1(B_1)$.) Given $c\in \R$ and $\alpha \in \{0,1\}$ we consider
\[
\min  \Big (\frac{1}{2} \int_{B_r} |\Delta \psi |^2  \, dV - \frac{\sigma ^2}{2} \int_{B_r} |\nabla \psi|^2 \, dV - \alpha \int_{B_r} \psi \, dV \Big)
\]
under the conditions $\psi  \in H^1_0 \cap H^2(B_r), \frac{\partial \psi }{\partial n}=c$ on $\partial B_r$. We claim the following. 
\begin{itemize}
\item A minimizer $\psi$ exists provided $\sigma$ is sufficiently small.  Indeed, existence follows by the usual direct method, noting that coercivity follows from the Poincar\'{e} inequality in $H^2_0 (B_r)$ applied to $\psi- \frac{c}{2} (|x|^2-r^2)$. 
 \item The minimizer $\psi$ is smooth and satisfies the Euler--Lagrange equation 
\begin{equation}
\label{as01.2}
\begin{cases}
\Delta^{\! 2} \psi+\sigma^2 \Delta \psi=\alpha&\hbox{in }B_r, \\
\psi= 0&\hbox{on }\partial B_r ,  \\
 \frac{\partial \psi}{\partial n} =c&\hbox{on }\partial B_r.
\end{cases}
\end{equation}
\item The minimizer $\psi$ is unique. For if $\psi_1, \psi_2$ are two minimizers, then $\psi_1-\psi_2 \in H^2_0(B_r)$ can be taken as a test function for both their Euler--Lagrange equations. Taking the difference, one finds 
\[
\int_{B_r} |\Delta (\psi_1-\psi_2)|^2 \, dV - \sigma^2\int_{B_r} |\nabla (\psi_1-\psi_2)|^2 \, dV =0.
\]
 From the restriction $0 < \sigma ^2 <\Lambda_1(B_r)$, we conclude $\psi_1-\psi_2 \equiv 0$.
 \item The minimizer $\psi$ is radially symmetric in $B_r$, from uniqueness. 
 \end{itemize}

A consequence of these observations is that provided $\sigma$ is sufficiently small, one has for each $a \in (0,1)$ that   minimizers $(f,g)$ for the two-ball energy exist and are radially symmetric, and satisfy the strong form of their respective Euler--Lagrange equations. We do not know at this point whether the pair of minimizers is unique: different values of the constant normal derivative could conceivably result in different pairs giving the same value for the energy. 

\subsection*{Unique minimizer for the two-ball energy} Note first that if $0\le \sigma_1 \leq \sigma_2$ then ${\mathcal E}(a, \sigma_1)\ge {\mathcal E}(a, \sigma_2)$. 

We shall prove the minimizing pair $(f,g)$ is unique by finding the exact values of $\e=-\frac{\partial f }{\partial n} > 0$  and $\delta =-\frac{\partial g }{\partial n} > 0$, with $a^{N-1} \e= b^{N-1}\delta$. 

Write $v_a$ for the function $\psi$ in \eqref{as01.2} with $r=a, \alpha=1$, and $c=0$, so that $h_a= (v_a-f)/\e$ corresponds to the function $\psi$ with  $r=a, \alpha=0,$  and $c=1$. Then $v_a$ and $h_a$ are radially symmetric. Similarly, with the same notation for $h_b$ we have $h_b = -g/\delta$. 

Substituting $f=v_a-\e h_a$ and $g=-\delta h_b$ into the energy and then integrating by parts (pushing the Laplacian in the cross terms onto $h_a$), we get
\[
{\mathcal E}(a, \sigma) = \min _{a^{N-1} \e= b^{N-1}\delta} \left( \frac {\e^2}{2}\int_{\partial B_a} \Delta h_a + \e \int_{B_a} h_a - \frac 12 \int_{B_a}v_a + \frac {\delta^2}{2}\int_{\partial B_b} \Delta h_b \right) .
\]
Minimizing with respect to $\e$ (with $\delta$ determined from the constraint $a^{N-1} \e= b^{N-1}\delta$), we find 
\[
\e = - \frac{\int_{B_a} h_a}{\int_{\partial B_a} \Delta h_a + (a/b)^{2N-2} \int_{\partial B_b} \Delta h_b}
\]
and hence the minimal energy is 
\[
2{\mathcal E}(a, \sigma) = - \frac{\left( \int_{B_a} h_a \right)^{\! 2}}{\int_{\partial B_a} \Delta h_a + (a/b)^{2N-2} \int_{\partial B_b} \Delta h_b} - \int_{B_a}v_a .
\]
The denominator is positive since $ \Delta h_a|_{\partial B_a}$ and $ \Delta h_b|_{\partial B_b}$ are positive constants. Indeed, we have  that $\Delta h_a+\sigma^2h_a=\tilde{c}$ for some $\tilde c$. On the boundary of the ball, $\Delta h_a=\tilde c$, and so it is enough to prove that $\tilde{c} >0$. Integrating $\Delta h_a+\sigma^2h_a=\tilde{c}$ on the ball and using $\partial_nh_a=1$ on $\partial B_a$, we get that $|\partial B_a|+ \sigma^2\int _{B_a} h_a= \tilde{c}|B_a|$.   If $\int_{B_a} h_a \geq 0$ then $\tilde{c}>0$. Suppose instead that $\int_{B_a} h_a <0$. Multiply $\Delta h_a+\sigma^2h_a=\tilde c$ by $h_a$ and integrate over $B_a$, obtaining that $-\int |\nabla h_a|^2+ \sigma^2 \int h_a^2= \tilde c\int h_a$. The left side is negative for sufficiently small $\sigma$, as a consequence of the Poincar\'e inequality in $H^1_0(B_1)$, and so again $\tilde{c}>0$. 

In fact we can prove $h_a<0$ on $B_a$, and hence $ \int_{B_a} h_a<0$. Note the boundary conditions imply that the radial function $h_a$ is negative near the boundary $\partial B_a$, and so if it changed sign, there would be a positive radius $r$ at which $h_a<0$ and $\nabla h_a=0$. The function $H(x)=h_a(x)-h_a(r)$ would then belong to $H^2_0(B_r)$ and satisfy $\Delta ^2 H+ \sigma ^2 \Delta H=0$, contradicting our choice of $\sigma ^2 < \Lambda_1 (B_r)$.

We conclude that the minimizing pair $(f,g)$ is unique. 

\subsection*{Derivative of the energy at $a=1$} 
The energy curve is flat at the right endpoint where the $a$-ball contains all the volume. 
\begin{proposition}\label{as13.1}
There exists $\sigma_N >0$ such that for all $0<\sigma \le \sigma_N$ we have
\[
\left. \frac{\partial\ }{\partial a}   {\mathcal E}(a, \sigma) \right|_{a=1} =0.
\]
\end{proposition}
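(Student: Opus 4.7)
The plan is to differentiate directly the explicit formula
\[
2\mathcal{E}(a,\sigma)=-\frac{A(a)^2}{D(a,b)}-V(a),\qquad D(a,b):=P(a)+\Bigl(\frac{a}{b}\Bigr)^{\!2N-2}Q(b),
\]
derived in the preceding paragraphs, where $A(a)=\int_{B_a}h_a$, $P(a)=\int_{\partial B_a}\Delta h_a$, $Q(b)=\int_{\partial B_b}\Delta h_b$, $V(a)=\int_{B_a}v_a$, and $b=b(a)$ is determined by $a^N+b^N=1$. My plan is to evaluate $\frac{d}{da}(-A^2/D)$ and $V'(a)$ separately at $a=1$ (where $b=0$) and show they cancel exactly.

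For the first piece I use the scaling $h_b(x)=bH_{\sigma b}(x/b)$ on $B_b$, where $H_\tau$ is the radial solution on the unit ball of $\Delta^{\!2}H+\tau^2\Delta H=0$ with $H=0$ and $\partial_nH=1$ on $\partial B_1$. The quantity $\alpha(\tau):=\Delta H_\tau(1)$ is real-analytic at $\tau=0$ with $\alpha(0)=N$ (since $H_0(r)=(r^2-1)/2$ gives $\Delta H_0\equiv N$), which yields $Q(b)=\alpha(\sigma b)|\Sph^{N-1}|b^{N-2}$. Substituting $b^N=1-a^N$ then unfolds
\[
\Bigl(\frac{a}{b}\Bigr)^{\!2N-2}\!Q(b)=\frac{N|\Sph^{N-1}|a^{2N-2}}{1-a^N}+|\Sph^{N-1}|a^{2N-2}\sum_{k\ge 1}\alpha_k\sigma^{2k}(1-a^N)^{2k/N-1}.
\]
After inverting $D$ as a series and multiplying by $-A(a)^2$, the only nonzero contribution to the derivative at $a=1$ comes from the dominant singular term $-A(a)^2(1-a^N)/(N|\Sph^{N-1}|a^{2N-2})$, whose derivative at $a=1$ equals $A(1)^2/|\Sph^{N-1}|$. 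Every subleading piece carries an extra factor $(1-a^N)^{1+2k/N}$ with $k\ge 1$ (or an extra $(1-a^N)$ coming from $P(a)$), and such powers all vanish to order strictly greater than $1$ at $a=1$, so their $a$-derivatives there are zero.

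For $V'(1)$ I use a Hadamard shape-derivative argument. Since $v_a=0$ on $\partial B_a$ and the normal velocity of the boundary is $1$, the Eulerian derivative $v_a':=\partial_av_a$ satisfies $V'(a)=\int_{B_a}v_a'$, where $v_a'$ solves $\Delta^{\!2}v_a'+\sigma^2\Delta v_a'=0$ in $B_a$ with $v_a'=0$ and $\partial_nv_a'=-\Delta v_a|_{\partial B_a}$ on $\partial B_a$. The second boundary condition comes from differentiating the clamped condition $\partial_nv_a\equiv 0$ on the moving boundary, together with $\partial^2_{nn}v_a=\Delta v_a$ at $\partial B_a$ (valid because $v_a$ is radial with $\partial_nv_a=0$ there, so the tangential and curvature terms in $\Delta$ drop out). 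Since $\Delta v_a|_{\partial B_a}$ is the constant $\Delta v_a(a)$ by radial symmetry, uniqueness forces $v_a'=-\Delta v_a(a)\,h_a$, whence $V'(a)=-\Delta v_a(a)A(a)$. A fourth-order Green identity applied to the pair $(h_a,v_a)$ collapses under the boundary values $h_a=v_a=\partial_nv_a=0$ and $\partial_nh_a=1$ to
\[
-A(a)=\int_{\partial B_a}\Delta v_a\,dS=|\Sph^{N-1}|a^{N-1}\Delta v_a(a),
\]
so eliminating $\Delta v_a(a)$ yields $V'(a)=A(a)^2/(|\Sph^{N-1}|a^{N-1})$, and in particular $V'(1)=A(1)^2/|\Sph^{N-1}|$. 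Combining with the previous paragraph,
\[
2\frac{\partial\mathcal{E}}{\partial a}(a,\sigma)\bigg|_{a=1}=\frac{A(1)^2}{|\Sph^{N-1}|}-V'(1)=0,
\]
which is the proposition.

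The main technical obstacle I anticipate is justifying the fractional asymptotic expansion of $D$ near $a=1$ and its term-by-term differentiation: for $N\ge 3$ several individual terms $\alpha_k\sigma^{2k}(1-a^N)^{2k/N-1}$ with $k<N/2$ genuinely blow up as $a\to 1$, so one must verify that the series $\alpha(\tau)=\sum_{k\ge 0}\alpha_k\tau^{2k}$ converges uniformly for the relevant range of $\tau=\sigma b$ and that the inverse $1/D$ really reorganizes into powers $(1-a^N)^{1+2k/N}$ with strictly positive exponents after the first. This is the role of the threshold $\sigma_N$: it must be small enough that $\alpha(\sigma b)$ is analytic for all values of $\sigma b$ occurring in the problem and that $\sigma^2<\Lambda_1(B_a)$ holds uniformly in $a$ near $1$, ensuring both that the unique minimizing pair $(f,g)$ used to derive the closed-form expression for $\mathcal{E}$ is well-defined and that the smooth dependence of $h_b$ on $b$ is available throughout.
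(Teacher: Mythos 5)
Your proof is correct and, at its core, rests on the same identity as the paper's: both compute the limiting singular behaviour of the fraction term near $a=1$ (blow-up at $b=0$), use the reciprocity formula $-A(a)=\int_{\partial B_a}\Delta v_a\,dS$, and show the resulting quantity cancels $V'(1)$. The difference lies in how you arrive at $V'$: the paper rescales $v_a$ to the unit ball, invokes the general ``derivative of a minimum'' principle to compute $\frac{d}{da}|_{a=1}\int_{B_a}v_a$, and then ``details omitted'' rewrites the answer by testing with $x\cdot\nabla v_1$. You instead run a Hadamard-type shape derivative directly: $V'(a)=\int_{B_a}v_a'$, with $v_a'$ satisfying the homogeneous plate equation with boundary data $v_a'=0$, $\partial_nv_a'=-\Delta v_a(a)$, and then uniqueness forces $v_a'=-\Delta v_a(a)h_a$. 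This yields the cleaner formula $V'(a)=A(a)^2/(|\Sph^{N-1}|a^{N-1})$ for every $a$, with all steps explicit, which is a nice improvement over the paper's one-point computation. Conversely, your treatment of the singular fraction via a full fractional-power expansion of $\alpha(\sigma b)$ is more elaborate than the paper's, which simply evaluates the limit $\lim_{a\nearrow 1}(a-1)(a/b)^{2N-2}\int_{\partial B_b}\Delta h_b=-|\partial B_1|$ directly from the blow-up $\tilde h_0(x)=(|x|^2-1)/2$; the direct limit is cleaner and sidesteps the convergence issues you flag as a technical obstacle. In summary, same underlying identities, different and complementary techniques for the two key derivative computations; your route for $V'$ is arguably the more illuminating one, while the paper's route for the fraction is the more economical.
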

\begin{proof}
Assume $0<\sigma ^2_N < 2^{2/N} \lb_1(B_1)$ where $\lb_1(B_1)$ is the first eigenvalue of the Dirichlet Laplacian of the unit ball. Since $2^{2/N} \lb_1(B_1) \leq \lambda_2(\Omega) \leq \Lambda_1(\Omega)$ by Krahn's inequality \cite[Theorem 4.1.1]{Henrot2006} and Payne's inequality \cite[Theorem 4.6.2]{Kesavan2006}, the assumption ensures that $\sigma^2 <\Lambda_1(\Om)$. 

By standard shape derivatives we have
\[
\left. \frac{d\ }{da} \right|_{a=1}  \int_{B_a} v_a = |\Delta v_1|^2_{\partial B_1} \, |\partial B_1|.
\]
To derive this equation, one could consult the general view of shape derivative formulas for the bi-Laplacian equation in \cite[Section 6]{BuKe22}. However, the formula is a quite direct consequence of the fact that 
\begin{align*}
 \int_{B_a} v_a 
 & =-2 \min_{u \in H^2_0(B_a)} \Big [\frac 12 \int_{B_a} |\Delta u|^2-\frac{\sigma ^2}{2}    \int_{B_a} |\nabla u|^2 - \int _{B_a} u \Big ] \\ 
& = -\min_{u \in H^2_0(B_1)}  \left( a^{N-4} \int_{B_1} |\Delta u|^2 - a^{N-2} \sigma^2 \int_{B_1} |\nabla u|^2 - 2a^N \int _{B_1} u \right) 
\end{align*}
by rescaling. Indeed, the last equation gives  
\begin{align*}
\left. \frac{d\ }{da}  \right|_{a=1}  \int_{B_a} v_a 
& = - \left( (N-4) \int_{B_1} |\Delta v_1|^2- (N-2)\sigma^2 \int_{B_1} |\nabla v_1|^2 - 2N \int _{B_1} v_1 \right) \\
& =  |\Delta v_1|^2_{\partial B_1} \, |\partial B_1| 
\end{align*}
where the first equality sign holds as the derivative of a minimum functional having a unique minimizer and the second equality sign is directly obtained by testing the equation satisfied by $v_1$ with the trial function $x \mapsto \sum_{i=1}^N x_i \frac{\partial v_1}{\partial x_i}$ (details omitted). 

Since $2 {\mathcal E}(1, \sigma)=- \int_{B_1} v_1$, it remains to compute the derivative of the other term in the energy, which in terms of difference quotients means we want to evaluate 
\begin{equation} \label{eq:big}
\lim_{a \nearrow 1} \, \frac{1}{a-1} \, \frac{  \left( \int_{B_a} h_a \right)^{\! 2} }{\int_{\partial B_a} \Delta h_a + (a/b)^{2N-2} \int_{\partial B_b} \Delta h_b}.
\end{equation}
The integrals over $B_a$ and $\partial B_a$ behave as expected in the limit as $a \nearrow 1$, with 
\[
\int_{B_a} h_a \to \int_{B_1} h_1 \qquad \text{and} \qquad \int_{\partial B_a} \Delta h_a \to  \int_{\partial B_1} \Delta h_1.
\]
To further evaluate that first limit, taking $v_1$ as a test function in the equation for $h_1$ and vice-versa, we get by subtraction that 
\[
\int_{B_1} h_1 = - \int_{\partial B_1} \Delta v_1 = -(\Delta v_1)|_{\partial B_1} \, |\partial B_1| .
\] 

To evaluate \eqref{eq:big}, we still need  
\[
\lim_{a \nearrow 1} \, (a-1) (a/b)^{2N-2} \int_{\partial B_b} \Delta h_b .
\]
We blow up around $b=0$ by introducing $\tilde{h}_b(x) = b^{-1} h_b(bx)$, which is the function $\psi$ from \eqref{as01.2} associated to $(b\sigma)^2$, $r=1$, $\alpha=0$, $c=1$. With this rescaling, 
\[
\int_{\partial B_b} \Delta h_b=b^{N-2} \int_{\partial B_1} \Delta \tilde{h}_b.
\]
Note that $\tilde{h}_0(x)= (|x|^2-1)/2$. Hence 
\begin{align*}
\lim_{a \nearrow 1} \, (a-1) \frac{a^{2N-2}}{b^{2N-2}} \int_{\partial B_b} \Delta h_b
& = -  \lim_{a \nearrow 1} \, a^{2N-2} \frac{1-a}{1-a^{N}}\int_{\partial B_1} \Delta \tilde{h}_b \\
& =- \frac 1N \int_{\partial B_1}  \Delta \tilde{h}_0 = - |\partial B_1| .
\end{align*}

Putting it all together, we find 
\[
 \left. \frac{\partial\ }{\partial a}   {\mathcal E}(a, \sigma) \right|_{a=1}  
 = - \frac{\left( -(\Delta v_1)|_{\partial B_1} \, |\partial B_1| \right)^2}{- |\partial B_1| } - (\Delta v_1)^2|_{\partial B_1} \, |\partial B_1| = 0 ,
\]
which concludes the proof. 
\end{proof}

Let us focus on the following expression for the energy
\begin{equation} \label{eq:energyexp}
2{\mathcal E}(a, \sigma) = -\frac{(1-a^{N})   \left( \int_{B_a} h_a \right)^{\! 2}  }{(1-a^{N})\int_{\partial B_a} \Delta h_a + a^{2N-2} \int_{\partial B_1} \Delta \tilde{h}_b} -\int_{B_a} v_a.
\end{equation}
For $a \in (0,1]$, all quantities in the expression depend smoothly on $a$, except for $\int_{\partial B_1} \Delta \tilde{h}_b$. The function $\tilde{h}_b$ solves the partial differential equation \eqref{as01.2} on $B_1$ with $\alpha=0,c=1$, and with coefficient $b^2 \sigma^2$ in place of $\sigma^2$. Consequently, $\tilde{h}_b$ may depend nonsmoothly on $a$ since $b^2= (1-a^N)^{2/N}$ is not smooth at $a=1$ when $N \geq 3$. 

Smoothness does hold when $N=2$. Indeed, by \eqref{eq:energyexp} the energy can be written in the form
\[
{\mathcal E}(a, \sigma)= \frac{(1-a) f_1(a, \sigma)}{f_2(a, \sigma)}+ f_3(a, \sigma)
\]
where each $f_i(a, \sigma)$ is smooth on $[\frac 12, 1] \times [-\epsilon, \epsilon]$, for some $\epsilon>0$, and $f_2(1, 0) \neq 0$. Hence:
\begin{lemma}\label{as13.2}
If $N=2$ then ${\mathcal E}(\cdot, \sigma) \to {\mathcal E}(\cdot, 0)$ in $C^2[\frac 12, 1]$ and $C^0[0, 1]$, as $\sigma \searrow 0$.
\end{lemma}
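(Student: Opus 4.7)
The plan is to justify the factorization ${\mathcal E}(a,\sigma) = (1-a)f_1(a,\sigma)/f_2(a,\sigma) + f_3(a,\sigma)$ announced just above the lemma, with each $f_i$ smooth on $[\tfrac12, 1]\times[-\epsilon, \epsilon]$ and $f_2(1,0) \neq 0$, and then pass to the limit $\sigma \to 0$. Starting from \eqref{eq:energyexp} with $N = 2$, the identity $1 - a^2 = (1-a)(1+a)$ isolates the vanishing factor in the numerator, which motivates
\begin{align*}
f_1(a,\sigma) &= -\tfrac{1}{2}(1+a)\Big(\int_{B_a} h_a\,dV\Big)^{\! 2}, \\
f_2(a,\sigma) &= (1-a^2)\int_{\partial B_a}\Delta h_a\,dS + a^2\int_{\partial B_1}\Delta\tilde{h}_b\,dS, \\
f_3(a,\sigma) &= -\tfrac{1}{2}\int_{B_a} v_a\,dV.
\end{align*}

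To verify joint smoothness in $(a,\sigma)$, I would rescale everything to the unit ball, writing $h_a(x) = a\,\eta(x/a;\,a\sigma)$, $v_a(x) = a^4\psi(x/a;\,a\sigma)$, and $\tilde{h}_b = \eta(\,\cdot\,;\,b\sigma)$, where $\eta(\,\cdot\,;\mu)$ and $\psi(\,\cdot\,;\mu)$ solve the standardized problems $\Delta^{\! 2}\eta + \mu^2\Delta\eta = 0$ and $\Delta^{\! 2}\psi + \mu^2\Delta\psi = 1$ on $B_1$ with the corresponding boundary data. Standard elliptic regularity gives $C^\infty$ dependence of $\eta$ and $\psi$ on the coefficient $\mu^2$, and the crucial point for $N = 2$ is that both $(a\sigma)^2 = a^2\sigma^2$ and $(b\sigma)^2 = (1-a^2)\sigma^2$ are polynomials in $(a,\sigma)$, so every integral appearing in $f_1, f_2, f_3$ is $C^\infty$ jointly in $(a,\sigma)$. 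At the corner $(1, 0)$ one has $\tilde{h}_0(x) = (|x|^2 - 1)/2$, hence $\Delta\tilde{h}_0 \equiv 2$ and $f_2(1, 0) = 2\cdot|\partial B_1| = 4\pi \neq 0$.

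After shrinking $\epsilon$ so that $f_2$ stays away from zero, ${\mathcal E}$ is jointly $C^\infty$ on $[\tfrac12, 1]\times[-\epsilon, \epsilon]$, and the mean value theorem applied in the $\sigma$-direction yields $\partial_a^k{\mathcal E}(a,\sigma) - \partial_a^k{\mathcal E}(a, 0) = O(\sigma)$ uniformly for $a\in[\tfrac12, 1]$ and $k = 0, 1, 2$, which is the required $C^2[\tfrac12, 1]$ convergence. For the $C^0[0, 1]$ convergence, the same rescaling extends to $[0, \tfrac12]$, where $b\ge\sqrt{3}/2$ is bounded away from zero and $f_2$ remains positive thanks to the bound on $\int_{\partial B_a}\Delta h_a$ established in the uniqueness discussion above. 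At $a = 0$ the admissibility constraint forces $g\in H^2_0(B_1)$, so $g\equiv 0$ is the minimizer and ${\mathcal E}(0,\sigma) = 0$, consistent with the fact that every term of \eqref{eq:energyexp} carries a positive power of $a$. Joint continuity on the compact set $[0, 1]\times[-\epsilon, \epsilon]$ then delivers uniform convergence.

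The main (and really the only) obstacle sits at $a = 1$: for $N \geq 3$, the parameter $(b\sigma)^2 = (1-a^N)^{2/N}\sigma^2$ carries a non-integer power and fails to be smooth at $a = 1$, so $\tilde{h}_b$ does not depend smoothly on $a$ there; this is exactly why the lemma --- and thereby the entire two-ball strategy under positive compression --- is restricted to the plane.
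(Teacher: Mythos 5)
Your proposal is correct and follows exactly the path the paper sketches: the factorization $\mathcal{E}(a,\sigma)=(1-a)f_1/f_2+f_3$ with $f_i$ jointly smooth and $f_2(1,0)\neq 0$. The paper states this factorization in a single sentence and writes ``Hence'' before the lemma; you supply the missing details --- the explicit $f_1,f_2,f_3$, the rescaling $h_a(x)=a\,\eta(x/a;a\sigma)$, $v_a(x)=a^4\psi(x/a;a\sigma)$, $\tilde h_b=\eta(\cdot;b\sigma)$ to the unit ball, the key observation that $(a\sigma)^2$ and $(b\sigma)^2=(1-a^2)\sigma^2$ are polynomial in $(a,\sigma)$ precisely when $N=2$, the evaluation $f_2(1,0)=2\cdot 2\pi=4\pi$, and a separate argument for the $C^0[0,1]$ convergence --- all of which check out.
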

%
%\begin{proof}
%Since $b^2=1-a^2$, it follows from the remarks above that all terms of ${\mathcal E}(a , \sigma) $ in formula \eqref{eq:energyexp} are analytic with respect to $\sigma \in (-\epsilon,\epsilon)$, for some $\epsilon>0$.  
%%\[a \quad \mapsto \quad \int_{B_a} h_a, \quad \int_{\partial B_a} \Delta h_a, \quad \int_{\partial B_1} \Delta \tilde{h}_b, \quad \int_{B_a} v_a.\]
%Indeed, by \eqref{eq:energyexp} the energy can be written in the form
%\[{\mathcal E}(a, \sigma)= \frac{(1-a) f_1(a, \sigma)}{f_2(a, \sigma)}+ f_3(a, \sigma)\]
%where each $f_i(a, \sigma)$ is analytic on $[\frac 12, 2] \times (-\epsilon, \epsilon)$ and $f_2(1, 0) \not=0$.
%The second order derivatives in $a$ are equicontinuous and uniformly bounded on $[\frac 12,2]$.
%\end{proof}
%

Note that $a \mapsto {\mathcal E}(a,0)$ is decreasing on $[0,1]$ by formula \eqref{eq:energyderiv} and is convex on the interval $[\Big (\frac{N+3}{2N+3}\Big)^{\!\! 1/N}, 1]$ by differentiating that formula. Moreover, the convexity is uniform on a slightly smaller interval, with 
\[ \frac{\partial ^2{\mathcal E}}{\partial a^2}(a,0) \ge C_N \quad \text{on} \quad \left[\Big (\frac{N+4}{2N+3}\Big)^{\! \! 1/N}, 1\right].\]

\subsection*{Proof of Theorem \ref{as13}}
Let $N=2$. At $\sigma =0$, we have by above that $\frac{\partial ^2{\mathcal E}}{\partial a^2}(a,0) \ge C_2>0$ when $a \in [\sqrt{6/7}, 1]$. From Lemma \ref{as13.2}, there exists $\sigma_2>0$ such that for all $\sigma \in (0, \sigma_2]$ and $a \in [\sqrt{6/7}, 1]$ we have 
\[
\frac{\partial ^2{\mathcal E}}{\partial a^2}(a,\sigma) \ge \frac {C_2}{2}.
\]
Hence ${\mathcal E}(a, \sigma)$ is a convex function of $a \in [\sqrt{6/7}, 1]$, when $\sigma \in [0, \sigma_2]$. Since, from Proposition  \ref{as13.1} the partial derivative at $a=1$ vanishes, we conclude that $a=1$ provides the minimum of ${\mathcal E}(a, \sigma)$ on $[\sqrt{6/7}, 1]$. 

In order to conclude the proof, it is enough to decrease $\sigma_2$ such that the minimum of ${\mathcal E}(a, \sigma)$ for $a \in [0, \sqrt{6/7}], \sigma \in [0,\sigma_2]$, is strictly greater than the maximum of ${\mathcal E}(1,\sigma)$ for $\sigma \in [0,\sigma_2]$, which can be done by continuity since ${\mathcal E}(a,0)$ achieves its strict minimum at $a=1$.
 
Lastly, suppose equality holds in Theorem \ref{as13}; we aim to show $\Omega$ is a disk. Strict minimality of ${\mathcal E}(\cdot,\sigma)$ at the right endpoint implies that $a=1$. Hence $\Omega_-$ is empty and $V(\Omega_+)=V(\Omega)$. Equality must hold in the argument above when Talenti's theorem is applied, and  so by arguing as in Step 5 of the proof of Theorem \ref{as10general}, one deduces that $\Omega \subseteq B$ and $B \setminus \Omega$ has $H^2$-capacity zero, using the fact established below that $u_B$ is positive on $B$. Note here that in $2$ dimensions, sets of $H^2$-capacity zero are necessarily empty, and so $\Om=B$. 

To finish the proof, it remains to show $u_B>0$. 
 
Since $E(\Omega,\sigma) = E(B,\sigma)$ by the equality assumption, uniqueness of energy minimizers implies that the minimizer $u_\Omega$ (extended by $0$ so that it belongs to $H^2_0(B)$) is also the minimizer $u_B$ for $B$. Let us show $u_B>0$. We may suppose $B$ is centered at the origin. Since $\Delta( \Delta u_B + \sigma^2 u_B)=1$ and $u_B$ is radial, there exists a constant $c$ such that  
\[
\Delta u_B(x)+ \sigma ^2 u_B(x) = c + \frac{|x|^2}{ 4}.
\]
Integrating over $B$ and keeping in mind that $\int_B \Delta u_B=0$ (since $\partial_n u=0$ on the boundary) while $\int_B u_B >0$ by \eqref{deflectionpositive}, we find $c+ 1/8>0$. Thus $\Delta u_B$ is positive on $\partial B$, and hence also on a neighborhood of $\partial B$. 
Assume for the sake of obtaining a contradiction that $u_B \leq 0$ somewhere in $B$. Writing $r= |x|$ and identifying $u_B(x)=u_B(r)$, there must exist radii $ 0\le r_1 <r_2 <1$ such that $u_B^\prime(r) > 0$ when $r \in (r_1, r_2)$ and $u_B^\prime(r_1)=u^\prime_B(r_2)=0$. By ``flipping'' the values when $r \in (r_1,r_2)$ and suitably increasing the values when $r\in (0,r_1)$, we construct a new function
\[
u(r)= u_B(r)|_{B\setminus B_{r_2}}+ (2u_B(r_2)-u_B(r))|_{B_{r_2}\setminus B_{r_1}}+ (2u_B(r_2)-2u_B(r_1)+u_B(r))|_{B_{r_1}} .
\]
Notice $u$ is continuous at $r_1$ and $r_2$, and $u^\prime$ equals $0$ there, so that $u \in H^2_0(B)$. It satisfies $|\nabla u_B|= |\nabla u|$ and $|\Delta u_B|= |\Delta u|$ a.e., and $\int _B u_B <\int_B  u$. But that is impossible since $u_B$ minimizes the energy $E(B,\sigma)$. Hence $u_B>0$ on $B$. 

\begin{remark}\label{as14}
Numerical computations indicate for $N=2$ that $\sigma_2 \approx 3.0$, while the first buckling eigenvalue of the unit disk occurs at   $\sigma \simeq 3.83$.
\end{remark}

\section{\bf The compliance problem with nonconstant load}
\label{sec:compliance}
 
 Let now discuss the general compliance maximization problem \eqref{as03} for nonconstant loads and no compression ($\sigma=0$):
 \begin{equation}\label{abl1000}
 \max \{\MC(\Om, \rho) : \Om \sq \R^N, \, V(\Om)=m, \, \rho \in L^\infty(\Om, [-1,1])\}.
 \end{equation}
 Alternatively, this problem can be written 
\begin{equation}\label{abl30}
  \min \{\frac 12 \int_\Om |\Delta u|^2-\int_\Om u \rho : \Om \sq \R^N, \, V(\Om)=m, \, \rho \in L^\infty(\Om, [-1,1]), u \in H^2_0(\Om)\}.
 \end{equation}
Consequently, if $\Om$ is given, the optimal load on $\Om$ satisfies $\rho_\Om^{opt} = \sign (u_{\Om }^{opt})$ so that $\int_\Om \rho_\Om^{opt} u_{\Om }^{opt}=\int _\Om |u_{\Om }^{opt}|$. 
In other words, the compliance equals the {\it total} deflection of the plate provided the load is optimal, maximizing the compliance. 
 
 We proceed to resolve the compliance problem in the planar Euclidean case. 

\begin{theorem} \label{th:total}
Let $N= 1, 2$, and suppose $\Om \subset \R^N$ is an open set with finite measure. For every measurable function $\rho :\Om\to[-1,1]$ we have
\begin{equation}\label{abll01}
\MC(\Om, \rho) \le\MC(\Om^*,1).
\end{equation}
 If equality occurs, then $\rho=1$ a.e.\ or $\rho=-1$ a.e., and $\Om=\Om^*$. 

\end{theorem}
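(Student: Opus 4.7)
My plan is to adapt the Talenti-plus-two-ball strategy of Section~\ref{sec:as10generalproof} to the variable-load setting, with one crucial structural twist: the auxiliary problem becomes \emph{symmetric} in the two balls instead of asymmetric as in \eqref{eq:asymmetricgeneral}. Since $\delta\MC=2\!\int u\,\delta\rho$ by self-adjointness of $\Delta^{\!2}$, the load maximizing $\MC(\Om,\rho)$ for fixed $\Om$ is $\rho=\sign(u)$, so by \eqref{abl30} it suffices to prove
\[
J^\ast(\Om):=\min_{u\in H^2_0(\Om)}\Big(\tfrac12\!\int_\Om|\Delta u|^2\,dV-\!\int_\Om|u|\,dV\Big)\;\ge\;J^\ast(\Om^*).
\]
Let $u^\ast$ realize $J^\ast(\Om)$ (existence by the direct method), write $u^\ast=u_+-u_-$ with open supports $\Om_\pm=\{\pm u^\ast>0\}$ whose union has full measure by analytic-hypoellipticity applied to $\Delta^{\!2}u=\sign(u)$, and let $B_a,B_b$ be Euclidean balls with $V(B_a)=V(\Om_+)$, $V(B_b)=V(\Om_-)$, so that $V(B_a)+V(B_b)=V(B_R)$. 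Applying Talenti's comparison separately to $u_\pm\in H^1_0(\Om_\pm)$, as in Step~2 of Section~\ref{sec:as10generalproof}, produces nonnegative radially symmetric $f\in H^1_0\cap H^2(B_a)$ and $g\in H^1_0\cap H^2(B_b)$ with equimeasurable Laplacians, $\int f\ge\int u_+$, $\int g\ge\int u_-$, and the constraint $\int_{B_a}\Delta f=\int_{B_b}\Delta g\le 0$ inherited from $u^\ast\in H^2_0(\Om)$. Because $|u^\ast|=u_++u_-$ couples to both parts with the same sign, we obtain the symmetric two-ball lower bound $J^\ast(\Om)\ge\mathcal{E}(a)$, where
\[
\mathcal{E}(a):=\min\Big(\tfrac12\!\int_{B_a}|\Delta f|^2-\!\int_{B_a}f+\tfrac12\!\int_{B_b}|\Delta g|^2-\!\int_{B_b}g\Big)
\]
over admissible $(f,g)$, unlike the asymmetric functional \eqref{eq:asymmetricgeneral} of the constant-load case.

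Next I would solve $\mathcal{E}(a)$ explicitly. The Euler--Lagrange system with a Lagrange multiplier $\lambda$ for the constraint reads $\Delta^{\!2}f=\Delta^{\!2}g=1$ in the respective balls, $f=g=0$ on boundaries, and $\Delta f=\lambda$ on $\partial B_a$, $\Delta g=-\lambda$ on $\partial B_b$. Writing $f=f_0+\lambda f_1$ and $g=g_0-\lambda g_1$, where $f_0$ satisfies $\Delta^{\!2}f_0=1$ with $f_0=\Delta f_0=0$ on $\partial B_a$ and $f_1$ satisfies $\Delta^{\!2}f_1=0$ with $f_1=0,\,\Delta f_1=1$ on $\partial B_a$ (and analogues on $B_b$), using the scalings $f_0(x)=a^4\widetilde f_0(x/a)$ and $f_1(x)=a^2\widetilde f_1(x/a)$ to reduce to the unit ball, eliminating $\lambda$ via the constraint, and substituting back yields the explicit form
\[
\frac{2\mathcal{E}(a)}{R^{N+4}}=A\bigl(s^{1+4/N}+(1-s)^{1+4/N}\bigr)+B\bigl(s^{1+2/N}-(1-s)^{1+2/N}\bigr)^2
\]
after setting $s=a^N/R^N\in[0,1]$, with $A<0$ and $B$ constants depending only on $N$. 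At $s=1$ (i.e., $a=R$) the constraint $\int_{B_R}\Delta f=0$ combined with radial symmetry upgrades the Navier condition $\Delta f=\lambda$ to the clamped condition $\partial f/\partial n=0$, so $\mathcal{E}(R)=E(B_R,0)$ matches Theorem~\ref{as10general}; by the exchange symmetry $(f,g)\leftrightarrow(g,f)$, $\mathcal{E}(0)=\mathcal{E}(R)$, consistent with both $\rho=+1$ and $\rho=-1$ being optimal.

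The main obstacle, and the step where dimension enters, is to show that $s\mapsto\mathcal{E}$ attains its minimum on $[0,1]$ only at the endpoints when $N\le 2$. Since $A<0$ and $s\mapsto s^{1+4/N}$ is convex, the first summand is minimized at $s\in\{0,1\}$; the second vanishes at $s=1/2$ and equals $1$ at $s\in\{0,1\}$, so the two terms compete. Computing $A$ and $B$ from the radial formulas for $\widetilde f_0,\widetilde f_1$ on the unit ball and checking the resulting one-variable inequality shows that the endpoints strictly minimize for $N\in\{1,2\}$, while for $N\ge 3$ an interior minimizer (two unequal balls) appears---precisely the obstruction flagged after the theorem statement. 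For the equality case, strict endpoint minimality forces $\Om_-=\emptyset$ (or $\Om_+=\emptyset$); Talenti equality then identifies the support with a geodesic ball up to measure zero; and uniqueness of the $H^2_0$ minimizer on that ball, combined with the fact that $H^2$-capacity-zero sets in $\R^N$ are empty for $N\le 2$, forces $\Om=\Om^*$ with $\rho=+1$ a.e.~(or $-1$, via $u\to-u$), as in Step~5 of the proof of Theorem~\ref{as10general}.
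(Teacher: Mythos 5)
Your approach is essentially the paper's: reduce to the two-ball auxiliary problem with a symmetric functional (both $\int f$ and $\int g$ subtracted), solve the Euler--Lagrange system explicitly, and check that the minimum over $a\in[0,R]$ is at the endpoints for $N\le 2$. Your formula for $\mathcal{E}$ in terms of $s=a^N/R^N$ is a correct reparametrization of the paper's expression. However, there is a genuine gap in your endgame.

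The claim that ``the endpoints strictly minimize for $N\in\{1,2\}$'' is false for $N=2$. In fact, when $N=2$, using $a^2+b^2=R^2$ the two-ball energy simplifies to $\mathcal{E}^{abs}(a)=-\pi R^6/384$, which is \emph{constant in $a$}. (In your notation: for $N=2$ one computes $3A+4B=0$, so the $s$-dependence cancels.) Thus the inequality $\MC(\Om,\rho)\le\MC(\Om^*,1)$ still follows --- the constant value does equal the one-ball value --- but ``strict endpoint minimality forces $\Om_-=\emptyset$'' has no content, and the equality case cannot be closed this way. The paper supplies a separate argument: if $0<a<R$ then equality in Talenti forces $\Omega$ to be a disjoint union of two balls, on each of which $\Delta u$ integrates to zero because $u\in H^2_0$; this forces the admissibility constant in \eqref{constraintgeneral.b} to be zero, which after plugging into the explicit formula for the optimal $c$ yields a contradiction ($c=a^2/8$ vs.\ $c=R^2/8$). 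Some substitute for this argument is needed.

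A secondary, smaller issue: you justify $V(\{u^*=0\})=0$ via analytic hypoellipticity applied to $\Delta^{\!2}u=\sign(u)$, but $\sign(u)$ is not analytic (and the minimizer of the nonsmooth functional $J^*$ need not satisfy a pointwise PDE on $\{u^*=0\}$). The paper instead allows $\{u=0\}$ to have positive measure, observes that then $a^N+b^N\le R^N$, and notes that the minimal two-ball energy $-(\text{const.})R^{N+4}$ is monotone decreasing in the effective radius, so one may assume $a^N+b^N=R^N$ without loss of generality.
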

An alternative formulation of the conclusion is
\[
\begin{split}
-2 \Big (\min_{u \in H^2_0(\Om) } \frac 12 \int_\Om |\Delta u|^2-\int_\Om u \rho\Big) 
& =\int _\Om \rho u_{\Om,\rho} \\
& \le \int _{\Om^*}  u_{\Om^*,1} 
=-2 \Big (\min_{u \in H^2_0(\Om^*) } \frac 12 \int_{\Om^*} |\Delta u|^2-\int_{\Om^*} u  \Big).
\end{split}
\]
\begin{proof}
We note first that functions in $H^2(\R^N)$ are continuous for $N=1,2$, and hence $u_{\Om,\rho}$ is continuous. The choice of $\rho \in L^\infty (\Om, [-1,1])$ may mean that the set $\{u_{\Om, \rho}=0\}$ has positive measure, but that is not an obstruction to the following proof. 

Based on Talenti's theorem \cite{Tal79} and on the observation that $\rho_\Om^{opt}= \sign (u_\Om^{opt})$, a two-ball analysis leads to the fact that 
\[
\min_{u \in H^2_0(\Om) } \frac 12 \int_\Om |\Delta u|^2-\int_\Om u \rho \ge \min_{a\in [0,R]} \mathcal E^{abs} (a),
\]
where 
\begin{equation}
\label{twoballenergygeneral.b}
{\mathcal E}^{abs}(a) = \min \left( \frac{1}{2} \int_{B_a} |\Delta f|^2 \, dV - \int_{B_a} f \, dV + \frac{1}{2} \int_{B_b} |\Delta g|^2 \, dV - \int_{B_b} g \, dV\right) ,
\end{equation}
with the minimum being taken over ``admissible'' pairs $(f,g)$, meaning: 
\begin{align}
f \in H^1_0 \cap H^2(B_a), & \qquad g \in H^1_0 \cap H^2(B_b) , \notag \\
a^{N-1} \left. \frac{\partial f }{\partial n} \right|_{\partial B_a} &  =  b^{N-1} \left. \frac{\partial g}{\partial n} \right|_{\partial B_b}= \text{constant}. \label{constraintgeneral.b}
\end{align}
 The key difference from the two-ball energy \eqref{twoballenergygeneral} for the constant load case is that here we do not know the sign of the load $\rho$ and so in \eqref{twoballenergygeneral.b} we must subtract the additional term $\int_{B_b} g \, dV$.

Note that $a^N+b^N \leq R^N$ where $R$ is the radius of the ball $\Omega^*$. We may suppose equality holds, meaning that $a^N+b^N = R^N$,  since the minimal two-ball energy that we find below has the form $-(\text{const.})R^{N+4}$ and thus decreases as $R$ gets bigger.

Arguing as in Section \ref{sec:compression}, one sees that a minimizing pair $(f,g)$ exists and that these functions are radially symmetric, satisfying $\Delta^2 f = 1$ in $B_a$ and $\Delta^2 g = 1$ in $B_b$, respectively. Direct computation leads to the formulas
\[
f(r) = c \frac{a^2 - r^2}{2N} - \frac{a^4 - r^4}{8N(N+2)} , \qquad g(r) = d \frac{b^2 - r^2}{2N} - \frac{b^4 - r^4}{8N(N+2)} ,
\]
for some constants $c$ and $d$. The constraint equation \eqref{constraintgeneral.b} evaluates to 
\begin{equation} \label{constraintgeneral.c}
2(N+2)a^N c -a^{N+2} = 2(N+2)b^N d - b^{N+2} , 
\end{equation}
which gives a linear relation between $c$ and $d$. The energy is then 
\[
\begin{split}
{\mathcal E}^{abs}(a) = \frac{\om_{N-1}}{8N^2} \min_{c,d}
\Big( 
& a^N \Big( \frac{N+6}{(N+2)(N+4)} a^4 - 4a^2 c + 4N c^2 \Big) 
\\
+ 
& 
b^N \Big( \frac{N+6}{(N+2)(N+4)} b^4 - 4b^2 d + 4N d^2) \Big) 
\Big)
\end{split}
\]
where $\om_{N-1}$ is the area of the unit sphere in $\R^N$. Thus the energy is a positive quadratic in $c$ and $d$. 

Minimizing with respect to $c$ and $d$ by Lagrange multipliers, subject to the constraint, gives 
\begin{equation} \label{eq:cd}
c = \frac{N a^{N+2} + (N+2) a^2 b^N + 2 b^{N+2}}{2 N(N+2)R^N} , \qquad d = \frac{2 a^{N+2} + (N+2) a^N b^2 + N b^{N+2}}{2 N(N+2)R^N} ,
\end{equation}
and hence 
\[
{\mathcal E}^{abs}(a) = 
- \om_{N-1} \frac{Na^{2N+4} + 
  2 a^N b^N ((N+2)(a^4 + b^4) + (N+4)a^2 b^2) + N b^{2N+4}}{2N^3 (N+2)^2 (N+4)R^N} .
\]

Suppose $N=1$. The energy formula gives 
\[
{\mathcal E}^{abs}(a) = - \frac{a^6 + 6 a^5 b + 10 a^3 b^3 + 6 a b^5 + b^6}{45R} = - \frac{R^6 - 15 R^2 (ab)^2 + 20 (ab)^3}{45R},
\]
where we simplified using $a+b=R$. The numerator of this simplified formula is positive and strictly decreasing with respect to $ab \in (0,R^2/4)$. Thus the energy is strictly minimal when $ab=0$, that is, when $a=0$ or $a=R$, both of which are one-ball situations. The minimal energy is $-R^5/45$.

Now suppose $N=2$. The energy formula simplifies to   
\[
{\mathcal E}^{abs}(a) = - \frac{\pi}{384} (a^2 + b^2)^3 = - \frac{\pi R^6}{384}
\]
since $a^2+b^2=R^2$. This energy is constant (independent of $a$) and so in particular attains its minimum at $a=R$ and at $a=0$. These are the desired one-ball configurations, completing the proof of the inequality in the theorem.

Suppose equality occurs in \eqref{abll01} for $N=1$ or $2$. Write $u=u_{\Om,\rho}$. Equality in the argument leading to the two-ball problem implies that $\rho=1$ a.e.\ on $\{u>0\}$ and $\rho=-1$ a.e.\ on $\{u<0\}$. For $N=1$, we saw above that equality forces one of those sets to have measure zero, meaning $a=0$ or $R$. We will show the same for $N=2$. So suppose $0<a<R$. Then when applying Talenti's theorem, equality holds for both the positive and negative parts, and so (by the equality case of that theorem \cite{Kesavan2006}) the sets $\{u>0\}$ and $\{u<0\}$ are each balls, possibly with sets of measure zero removed. Thus $\Om$ is the union of two disjoint balls, up to sets of measure zero. Since $u \in H^2_0(\Om)$ is the $H^2$-limit of a sequence of compactly supported smooth functions in those balls, we conclude that $\Delta u$ integrates to zero over each ball. Therefore Talenti's rearrangement of $-\Delta u$ forces the constant in \eqref{constraintgeneral.b} to equal zero. Hence the left side of \eqref{constraintgeneral.c} equals zero, giving $c=a^2/8$ when $N=2$. But when $N=2$, simplifying \eqref{eq:cd} with the help of $a^2+b^2=R^2$ yields that $c=R^2/8$, a contradiction. Hence $a$ must equal $0$ or $R$. 

Suppose $a=R$, so that $\{u<0\}$ has measure zero and so $\rho=1$ a.e.\ on $\Omega$, by above. Hence Theorem \ref{as10general} applies. The equality case of that theorem says that $\Om$ equals a ball, since  in dimensions $N=1,2$, any points of the ball omitted from $\Om$ would have positive $H^2$ capacity. Similarly, if $a=0$ then $\rho=-1$ a.e.\ and $\Omega$ equals a ball.  
\end{proof}

\subsection*{Remarks and open problems for variable loads}
In dimension $N \geq 3$, the energy of the balls  problem \eqref{twoballenergygeneral.b} seems from numerical work to be minimal for two equal balls, so that the conclusion of Theorem \ref{th:total} cannot be achieved by this method. For example, when $N=3$ the energy equals
\[
{\mathcal E}^{abs}(a) = - 2\pi \frac{3 a^{10} + 10 a^7 b^3 + 14 a^5 b^5 + 10 a^3 b^7 + 3 b^{10}}{4725R^3}
 \]
with $a^3+b^3=R^3$. Plotting as a function of $a$, one finds that the minimum occurs at $a=(1/2)^{1/3} R$, that is, when the two balls have equal size, in sharp contrast to the single ball we found in Theorem \ref{th:total} for $N=1,2$.

We can formulate the following open problems.

\subsection*{Open problem 1} 
Extend Theorem \ref{th:total} to dimensions $N \geq 3$, by proving that the maximum of the compliance in \eqref{abl1000} is attained on a ball with constant load $1$.

\subsection*{Open problem 2} Prove that the maximum of the {\it mean} deflection
\[
\max \left\{ \int_\Om u_{\Om, \rho} \, dV: \Om \sq \R^N, V(\Om)=m \text{ and } \rho \in L^\infty(\Om, [-1,1]) \right\}
\]
is attained on the ball with the constant load $\rho \equiv 1$ in dimensions $N\ge 2$. 

\subsection*{Open problem 3} Prove that the maximum of the {\it total } deflection
\[
\max \left\{ \int_\Om |u_{\Om, \rho}| \, dV: \Om \sq \R^N, V(\Om)=m \text{ and } \rho \in L^\infty(\Om, [-1,1]) \right\}
\]
is attained on the ball with the constant load $\rho \equiv 1$ in dimensions $N\ge 2$. 
 
Clearly, if Problem 3 has a positive answer, then the same occurs for problems 1 and 2. Let us point out that at fixed $\Om$, the optimal load $\rho$ is always a bang-bang function. Indeed, in Problem 1, the optimal load equals  $ \sign(u_{\Om, \rho})$ due to \eqref{abl30}. In Problems $2$ and $3$, the optimal $\rho$ is a maximizer of a linear or convex functional (because $u$ in \eqref{as01} depends linearly on $\rho$) taken over the convex set $L^\infty(\Om, [-1,1])$. Consequently the maximizer takes only the values $\pm1$.

\subsection*{\bf Alternative proof of Theorem \ref{th:total} by symmetrization on a single ball}

We would like to mention another way of proving Theorem \ref{th:total}, which resorts to a new Talenti-style comparison principle, and allows to symmetrize directly over a single ball. In this way, we bypass the two-ball problem.

Unfortunately, this comparison principle works properly only in dimension 2 and hence cannot be used to go further than Theorem \ref{th:total}. On the other hand, it could perhaps be generalized in the non-Euclidean setting, allowing one to prove the analogue of Theorem \ref{th:total} for domains in $M=\Sph^2$ and $M=\Hyp^2$. We  emphasize that with this strategy we are able as well to treat the equality case. The comparison principle goes as follows. 
\begin{theorem}[Leylekian \protect{\cite[Section 7.1]{L24thesis}}] \label{thm:talenti-signe}
Suppose $\Omega\subseteq\R^N$ is bounded and open. Let $u\in H_0^2(\Omega)$, assume $\{\nabla u=0\}$ is negligible, and put $f=-\Delta u$. If $v\in H_0^1(\Omega^*)$ solves
\[
\left\{
\begin{array}{rcll}
-\Delta v & = & f^* & \text{in }\Omega^*,\\
v & = & 0 & \text{on }\partial\Omega^*,
\end{array}
\right.
\]
then $v\in H_0^2(\Omega^*)$ and $v$ is radially symmetric and decreasing, hence nonnegative, and for any $1\leq p<\infty$ one has 
\[
\int_{\Omega^*}|v|^p\geq\left[\int_0^{|\Omega_+|}|u^\mystar|+\int_{|\Omega_+|}^{|\Omega|}\left(\frac{(N-2)|\Omega|}{Nz}+1\right)(|\Omega|/z-1)^{(N-2)/N}|u^\mystar(z)| \, dz\right]^p \! |\Omega|^{1-p},
\]
where $u^\mystar$ is the $1$-dimensional decreasing rearrangement of $u$ and $\Omega_+=\{u>0\}$. In particular, if $N=2$ we obtain
\[
\int_{\Omega^*}|v|^p\geq \left(\int_\Omega|u|\right)^p |\Omega|^{1-p} .
\]
Furthermore, if equality holds then $p=1$ and $\Omega$ is a ball up to a negligible set.
\end{theorem}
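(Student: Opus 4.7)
My plan has three steps: (i) verify the regularity and monotonicity of $v$, (ii) derive a Talenti-style pointwise differential inequality on $[0,|\Omega|]$ relating $u^{\mystar}$ to $v^{\mystar}$, and (iii) integrate this against the weight $\psi(z)=\mathbf{1}_{[0,|\Omega_+|]}(z)+c(z)\mathbf{1}_{[|\Omega_+|,|\Omega|]}(z)$ and invoke H\"older. For (i), the condition $u\in H_0^2(\Omega)$ forces $\int_\Omega f=-\int_{\partial\Omega}\partial_n u\,d\mathcal{H}^{N-1}=0$, hence $\int_{\Omega^*}f^*=0$. Radial symmetry makes $v$ radial; the radial ODE $(r^{N-1}v')'=-r^{N-1}f^*$ combined with $\int f^*=0$ gives $v'(R)=0$, so $v\in H_0^2(\Omega^*)$. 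The strict radial monotonicity of $f^*$ with zero mean forces $\int_{B_r}f^*>0$ on $(0,R)$, whence $v'<0$ in the interior and $v\ge 0$.

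For step (ii), let $U(t)=|\{u>t\}|$ and $F(z)=\int_0^z f^*(\tau)\,d\tau$. Applying the divergence theorem to $-\Delta u=f$ on $\{u>t\}$ and using $\partial_n u|_{\partial\Omega}=0$ to drop the boundary contribution gives $\int_{\{u>t\}}f=\int_{\{u=t\}}|\nabla u|\ge 0$ for a.e.\ $t$; combining coarea, the isoperimetric inequality, Cauchy--Schwarz, and Hardy--Littlewood then produces
\[
-(u^{\mystar})'(z)\le\frac{F(z)}{(N\omega_N^{1/N})^2\,z^{2(N-1)/N}}=-(v^{\mystar})'(z)\quad\text{a.e.\ on }(0,|\Omega|),
\]
alongside the explicit formula $v^{\mystar}(z)=\int_z^{|\Omega|}F(y)/[(N\omega_N^{1/N})^2 y^{2(N-1)/N}]\,dy$. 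Equivalently, $u^{\mystar}-v^{\mystar}$ is nondecreasing with endpoint value $\inf u\le 0$.

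For step (iii), split at $z=|\Omega_+|$. Using $u^{\mystar}(|\Omega_+|)=0$ and Fubini converts $\int_0^{|\Omega_+|}u^{\mystar}(z)\,dz$ and $\int_{|\Omega_+|}^{|\Omega|}c(z)|u^{\mystar}(z)|\,dz$ into integrals of $(-(u^{\mystar})'(s))$ weighted by $s$ and by $C(s)=\int_s^{|\Omega|}c(z)\,dz$ respectively. Substituting the pointwise Talenti inequality from (ii) and balancing against $\int_{\Omega^*}v$ through the explicit $v^{\mystar}$ formula yields the $p=1$ version of the bound; the coefficient $c(z)=((N-2)|\Omega|/(Nz)+1)(|\Omega|/z-1)^{(N-2)/N}$ is engineered precisely so that the resulting algebraic identity closes, with the Neumann condition $F(|\Omega|)=0$ providing the crucial cancellation at the right endpoint. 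For $N=2$ this collapses to $c(z)\equiv 1$ and the inequality reduces to $\int_{\Omega^*}v\ge\int_\Omega|u|$, mirroring the two-ball degeneracy of Section~\ref{sec:compliance}. The general $p\ge 1$ case follows by H\"older: $\int_{\Omega^*}v\le|\Omega|^{(p-1)/p}\|v\|_p$ gives $\|v\|_p^p\ge|\Omega|^{1-p}(\int_{\Omega^*}v)^p$. Equality forces equality in each step: equality in the isoperimetric inequality of (ii) forces the superlevel sets of $u$ to be balls, hence $\Omega$ is a ball up to a null set, while equality in H\"older combined with $v|_{\partial\Omega^*}=0$ is compatible with $v\not\equiv 0$ only when $p=1$. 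The principal obstacle is the calibration in (iii): verifying that the Green-function-flavored weight $c(z)$ produces the precise telescoping required, an algebraic identity whose explicit verification for $N\ge 3$ is considerably more intricate than the trivial $N=2$ collapse and is the reason the present proof strategy is limited to the plane.
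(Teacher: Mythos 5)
The overall skeleton (regularity of $v$, a pointwise Talenti-type ODE inequality, integration against a weight, and a Jensen/H\"older step) matches the paper's strategy, but there is a genuine error at the heart of step (ii) that breaks the argument and also misattributes the role of the weight $c(z)$.

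Your claimed pointwise inequality
\[
-(u^{\mystar})'(z)\le\frac{F(z)}{(N\omega_N^{1/N})^2\,z^{2(N-1)/N}}=-(v^{\mystar})'(z)\quad\text{a.e.\ on }(0,|\Omega|)
\]
is only correct on the range $z<|\Omega_+|$, that is, for positive levels $s>0$. The source of this inequality is the isoperimetric bound $P_\Omega(\{u>s\})\geq C_N\,\mu(s)^{(N-1)/N}$. For $s>0$ the superlevel set $\{u>s\}$ stays away from $\partial\Omega$, so the relative perimeter coincides with the full perimeter and this bound applies. But for $s<0$ the set $\{u>s\}$ hugs $\partial\Omega$, and the quantity appearing in the Cauchy--Schwarz step is the \emph{relative} perimeter $P_\Omega(\{u>s\})=P(\Omega\setminus\{u>s\})$, i.e.\ the perimeter of the sublevel set, which is compactly contained in $\Omega$. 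The isoperimetric inequality then controls it by $(|\Omega|-\mu(s))^{(N-1)/N}$, not by $\mu(s)^{(N-1)/N}$. The resulting pointwise inequality on the range $z>|\Omega_+|$ therefore carries an extra factor $g(z)^2=(|\Omega|/z-1)^{2(N-1)/N}$ on the left, namely $g(z)^2\,(-(u^{\mystar})'(z))\le -(v^{\mystar})'(z)$. Your version, applied uniformly, would give $u^{\mystar}\le v^{\mystar}$ on all of $(0,|\Omega|)$, and then the strong conclusion $\int_{\Omega^*}v\geq\int_\Omega|u|$ would follow for \emph{every} $N$, with no need for the weight $c(z)$ at all --- which you yourself acknowledge is false. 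The weight $c(z)$ is not ``engineered so the algebra closes'' or ``Green-function-flavored''; it is exactly $-\frac{d}{dz}\bigl[z\,g(z)^2\bigr]$, the derivative of the weight $z\,g(z)^2$ that emerges from the corrected isoperimetric inequality on the negative region after Fubini. For $N=2$ one has $g\equiv 1$, whence $c\equiv 1$ and the collapse you observe, but for $N\ge 3$ the factor $g(z)^2$ is genuinely present and degrades the bound.

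The equality analysis is also incomplete. Equality in the isoperimetric inequality (for almost every level, on both signs) forces almost all superlevel sets $\{u>s\}$, $s>0$, to be balls and almost all sublevel sets $\{u<s\}$, $s<0$, to be balls. This shows $\Omega$ is either a single ball or a disjoint union of two balls; you must still rule out the two-ball case. The paper does so by noting that $f=-\Delta u$ integrates to zero over each ball (since $u\in H_0^2$), hence changes sign on each, which forces strict inequality in the Hardy--Littlewood step on some range of levels, a contradiction. Without that argument the equality characterization is not established.
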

The main interest of Theorem \ref{thm:talenti-signe} is that it makes no sign assumption on $u$. Thus the theorem can be applied directly to a function without splitting into positive and negative parts.

The proof of this comparison principle can be found in Leylekian \cite[Section~7.1]{L24thesis}. We sketch it below for the reader's convenience.
\begin{proof}
The fact that $v$ belongs to $H_0^2(\Omega^*)$ follows from the usual relation
\[
\int_{\partial\Omega^*}\partial_nv=-\int_{\Omega^*}f^*=-\int_\Omega f=\int_{\partial\Omega}\partial_nu=0 
\]
since $u \in H_0^2(\Omega)$. That $v$ is radially decreasing comes from the maximum principle since $-\Delta \partial_r v=\partial_r f^*\leq 0$ and $\partial_r v=0$ at $r=0, R$, where $R$ is the radius of $\Om^*$.

Now let us prove the comparison result. By classical arguments (see for instance \eqref{eq:Ueq}), we have for all $s\in\R$ that 
\[
\int_{\{u>s\}} f \, dV=\int_{\{u=s\}}|\nabla u| \, dP=\int_{\{u=s\}\cap\Omega}|\nabla u| \, dP,
\]
the last relation coming from the fact that $\nabla u=0$ on $\partial\Omega$. Then, the Cauchy--Schwarz inequality provides that 
\[
P_\Om(\{u>s\})^2\leq \int_{\{u=s\}\cap\Omega}\frac{1}{|\nabla u|}dP\int_{\{u>s\}} f \, dV,
\]
where $P_\Om$ is the relative perimeter with respect to $\Om$. But one can show with standard arguments (see for instance the beginning of the proof of Theorem \ref{talenti}) that $\int_{\{u=s\}\cap\Omega}\frac{1}{|\nabla u|} \, dP=-\mu'(s)$, where $\mu$ is the distribution function of $u$. The Hardy--Littlewood inequality then gives
\begin{equation} \label{HL}
P_\Om(\{u>s\})^2\leq |\mu'(s)|\int_0^{\mu(s)}f^\mystar.
\end{equation}

Now comes the crucial discussion. Since $u=0$ on $\partial\Om$, we need to consider the cases $s>0$ and $s<0$ separately. If $s>0$ then $P_{\Om}(\{u>s\})= P(\{u>s\})$, while if $s<0$ then $P_{\Om}(\{u>s\})= P(\Omega\setminus\{u>s\})$. As a result, the isoperimetric inequality gives
\[
P_\Om(\{u>s\})\geq C_N
\begin{cases}
V(\{u>s\})^{(N-1)/N}=\mu(s)^{(N-1)/N} & \text{if }s>0,\\
[V(\Omega)-V(\{u>s\})]^{(N-1)/N}=[|\Omega|-\mu(s)]^{(N-1)/N} & \text{if }s<0,
\end{cases}
\]
where $C_N$ is the isoperimetric ratio for balls. Combining this inequality with the previous one, we find that
\[
\1_{s>0}+\1_{s<0}\left[\frac{|\Omega|}{\mu(s)}-1\right]^{2(N-1)/N}\leq\frac{|\mu'(s)|}{C_N^2\mu(s)^{\frac{2(N-1)}{N}}}\int_0^{\mu(s)}f^\mystar.
\]
Apply the inequality with $s=u^\mystar(x)$, and recall that $\mu(u^\mystar(x))=x$ and hence $\mu'(u^\mystar(x))=1/{u^\mystar}'(x)$, since we assumed that $\{\nabla u=0\}$ is negligible. Therefore,
\[
[\1_{u^\mystar(x)>0}+g(x)^2\1_{u^\mystar(x)<0}]|{u^\mystar}'(x)|\leq\frac{1}{C_N^2x^{\frac{2N-2}{N}}}\int_0^{x}f^\mystar,
\]
where $g(x)=\left[\frac{|\Omega|}{x}-1\right]^{(N-1)/N}$. Solving explicitly the equation fulfilled by $v$ (see e.g.\ step 4 in the proof of  \cite[Theorem 3.1.1]{Kesavan2006}), one realizes that the right side is exactly $-{v^\mystar}'(x)$. Integrating with respect to $x \in (z,|\Omega|)$, we finally obtain
\[
\int_z^{|\Omega|}[\1_{u^\mystar(x)>0}+g(x)^2\1_{u^\mystar(x)<0}]|{u^\mystar}'(x)| \, dx\leq v^\mystar(z).
\]
Now, put $G(x)=\1_{u^\mystar(x)>0}+g(x)^2\1_{u^\mystar(x)<0}$, and use Jensen's inequality to find
\begin{align*}
\int_0^{|\Omega|}|v^\mystar(z)|^p \, dz 
& \geq |\Omega|^{1-p}\left[\int_0^{|\Omega|} \int_z^{|\Omega|}G(x)|{u^\mystar}'(x)| \, dx dz \right]^p \\
%&= |\Omega|^{1-p}\left[\int_0^{|\Omega|}\int_0^{|\Omega|}\1_{x>z}G(x)|{u^\mystar}'(x)| \, dxdz\right]^p \\
&= |\Omega|^{1-p}\left[\int_0^{| \Omega|}xG(x)|{u^\mystar}'(x)| \, dx\right]^p 
\end{align*}
by Fubini. Since $|{u^\mystar}'(x)|=-{u^\mystar}'(x)$, to deduce the inequality in the theorem it remains only to integrate by parts over the intervals $(0,|\Om_+|)$ and $(|\Om_+|,|\Om|)$, noting that although $G$ is discontinuous at $x=|\Om_+|$, the boundary contribution from that point vanishes because $u^\mystar(|\Om_+|)=0$.

Observe that in case of equality, the rigidity in Jensen's inequality yields that $u$ is constant (which is impossible since $\{\nabla u=0\}$ is negligible), except if $p=1$. In this case, we also have equality in the isoperimetric inequality, which means that almost all the superlevel sets $\{u>s\}$ with $s>0$ are balls, and if $\{u<0\}$ is nonempty then almost all sublevel sets $\{u<s\}$ with $s<0$ are balls too. As a result, we may express $\{u>0\}$ as a countable union of (not necessarily concentric) balls, and similarly for $\{u<0\}$ (if it is nonempty). Since $\{u=0\}$ is negligible and $\Omega$ is open, it follows that $\Omega$ is either a ball or the union of two disjoint balls. We show it cannot be the union of two balls. Note $f$ integrates to zero on each of those balls, since $u \in H^2_0(\Om)$, and so $f$ takes both positive and negative values on each ball. Consequently the Hardy--Littlewood inequality \eqref{HL} is strict for some interval of $s$ values, contradicting that equality holds in the conclusion of the theorem.

\end{proof}

\begin{proof}[Alternative proof of Theorem \ref{th:total} for $N=2$, by using Theorem \ref{thm:talenti-signe}]
The only issue to fix before applying Theorem \ref{thm:talenti-signe} is that $u_{\Omega,\rho}$ might not be analytic. To circumvent this problem, instead of $u_{\Omega,\rho}$ we consider initially an arbitrary function $u\in H_0^2(\Omega)$ such that $\{\nabla u=0\}$ is negligible. In this case, if $v$ is the function vanishing at the boundary such that $-\Delta v=(-\Delta u)^*$ in $\Omega^*$, Theorem \ref{thm:talenti-signe} with $N=2$ and $p=1$ yields that $v\in H_0^2(\Omega^*)$ is nonnegative and
\[
\int_\Omega|\Delta u|^2-2\int_\Omega\rho u\geq \int_{\Omega^*}|\Delta v|^2-2\int_{\Omega^*}v\geq -\MC(\Omega^*,1).
\]

Thus the inequality in the theorem follows provided we show that functions in $H_0^2(\Omega)$ for which $\{\nabla u=0\}$ is negligible are dense in $H_0^2(\Omega)$. 
%This density allows us to conclude as expected that
%\[
%-\MC(\Omega,\rho)=\inf_{u\in\mathcal{U}}\int_\Omega|\Delta u|^2-2\int_\Omega\rho u\geq -\MC(\Omega^*,1).
%\]
To do so, for an arbitrary function $u$ in $H_0^2(\Omega)$, we consider $f=\Delta^2 u$ and define $u_k\in H_0^2(\Omega)$ by the relation $\Delta^2 u_k=f+(1/k)\1_{\{f=0\}}$. In this way, $u_k$ converges to $u$ as $k$ goes to infinity. Further, $\Delta^2 u_k\neq 0$ on $\Omega$, which prevents $\nabla u_k$ from vanishing on a nonnegligible subset.

Finally, if $\MC(\Omega,\rho)=\MC(\Omega^*,1)$ then $\Omega$ is a ball up to a negligible set, by the equality case in Theorem \ref{thm:talenti-signe}. Then, as previously, we have that $\Omega\subseteq \Omega^*$ pointwise since the interior of $\overline{\Omega^*}$ is $\Omega^*$. As a result, $u_{\Omega,\rho}$ is in $H_0^2(\Omega^*)$ and it optimizes $\MC(\Omega^*,1)$. This fact yields $u_{\Omega,\rho}=u_{\Omega^*,1}$ $H^2$-q.e., and since $u_{\Omega^*,1}>0$, it must be that $\Omega^*\subseteq \Omega$ up to sets of  null $H^2$-capacity. Lastly, $(1-\rho)u_{\Omega^*,1}$ is a nonnegative function over $\Omega^*$, but its integral is $\MC(\Omega^*,1)-\MC(\Omega,\rho)=0$, and so it must vanish a.e. From the positivity of $u_{\Omega^*,1}$ we conclude that $\rho=1$ a.e.
\end{proof}

\section{\bf Possible alternative strategies}
\label{sec:alternative}

Let us comment on possible different strategies of proof, focusing on the simplest situation of Euclidean space with no compression and constant load $\rho \equiv 1$. 

\subsection*{Overdetermined boundary condition} A possibility for proving Theorem \ref{as10general} is to formulate an overdetermined problem, similar to Serrin's. For the buckling load conjecture, this approach was implemented by Weinberger and Willms; see for instance \cite{AB03}. In order to obtain a full proof, such a strategy requires a priori knowledge of the existence of an optimal domain that is smooth enough and has connected boundary. Assuming this information can be obtained, then  the Hadamard formula (based on a shape derivative argument) together with the smoothness of $u_\Om$ and  the connectedness of the boundary, lead to the overdetermined boundary condition 
\[
\Delta u_\Om = \text{constant} \text{ on } \partial \Om.
\]
For the compliance functional, this overdetermined boundary problem has been solved by Bennett \cite{Be86} under  regularity hypotheses of  the boundary. See also Payne and Schaefer \cite{PaSc94}, Barkatou \cite{Ba08}, and Dalmasso \cite{Da90}.  However, for now the proof remains incomplete, because the regularity of the free boundary is not fine enough and the boundary connectedness of an optimal set is a difficult technical problem that is completely open. 
 
\subsection*{Rayleigh quotient approach}
An alternative approach to the mean deflection problem under constant load is to work with the Rayleigh quotient. This approach is described in Leylekian's PhD thesis \cite[Theorem 5.18]{L24thesis}, where he considers the compliance problem \eqref{as03.1} in the Euclidean setting with no compression and constant load $\rho \equiv 1$ (that is, the Euclidean case of our Theorem \ref{as10general}). Interpreting the inverse of the compliance as a Rayleigh quotient of a semilinear eigenvalue, one can follow the Talenti/Ashbaugh--Benguria framework for the first clamped plate eigenvalue  and build  a two-ball auxiliary problem for the Rayleigh quotient. The solution of the auxiliary problem can be computed exactly and from that explicit formula one can show the optimal configuration is a single ball. Note that this auxiliary problem treats the nodal regions $\Omega_+$ and $\Omega_-$ symmetrically, and consequently differs from our functional \eqref{eq:asymmetricgeneral} in the proof of Theorem \ref{as10general}. This difference raises hope that some additional helpful information could be extracted. However, that does not seem to be the case.

\section*{Acknowledgments}
Richard Laugesen's research was supported by grants from the Simons Foundation (\#964018) and the National Science Foundation 
({\#}2246537).  Rom\'eo Leylekian was partially supported by the Funda\c c\~{a}o para a Ci\^{e}ncia e a Tecnologia, I.P. (Portugal), through project {\small doi.org/10.54499/UIDB/00208/2020}. We are grateful to Almut Burchard for guidance on distribution functions.

\appendix

\section{\bf Talenti-style elliptic rearrangement on constant curvature spaces}

The rearrangement result we need for proving Theorem \ref{as10general} does not seem to exist in the literature in quite the needed form, and so here we state and prove it in a form convenient for our purposes. 

Recall that $M$ denotes either Euclidean space $\RN$, the round unit sphere $\Sph^N$, or $N$-dimensional hyperbolic space $\HN$ with sectional curvature $-1$. Assume $N \geq 1$ for the Euclidean space, and $N \geq 2$  for the sphere and hyperbolic space. In each case, the metric is denoted by $\mathfrak{g}$, the gradient is $\nabla=\nabla_{\! \mathfrak{g}}$, the Laplace--Beltrami operator is $\Delta_\mathfrak{g}$, and the volume element is $dV=dV_\mathfrak{g}$. The analyst's Laplacian is $\Delta=-\Delta_\mathfrak{g}$. Note $|\nabla u|$ means $\mathfrak{g}(\nabla_{\! \mathfrak{g}} u,\nabla_{\! \mathfrak{g}} u)^{1/2}$.
\begin{theorem} \label{talenti}
Let $\Omega \subset M$ be an open set with finite volume (and in the spherical case assume $1 \notin H^2_0(\Om)$) and take $B \subset M$ to be an open geodesic ball with the same volume. Suppose $u \in H^1_0 \cap W^{2,1}(\Omega)$ is $C^2$-smooth and the critical set $\{ |\nabla u|=0 \}$ has zero volume, and $f \in H^1_0 \cap W^{2,1}(B)$ is $C^2$-smooth away from the center of $B$, and that they solve the Poisson equations  
\[
\Delta u = U \quad \text{ on } \Omega , \qquad \Delta f = F \quad \text{ on } B ,
\]
where the symmetric decreasing rearrangement of $-U$ is $-F$. 

If $u \geq 0$ then the symmetric decreasing rearrangement of $u$ is bounded by the radial function $f$, that is $0 \leq u^* \leq f$, and so 
\begin{equation} \label{eq:Tal1}
\int_\Omega u^p \, dV \leq \int_B f^p \, dV , \qquad p \in [1,\infty) , 
\end{equation}
and also 
\begin{equation} \label{eq:Tal2}
\int_\Omega |\nabla u|^q \, dV \leq \int_B |\nabla f|^q \, dV , \qquad  q \in (0,2].
\end{equation}
Further, $f$ is strictly radially decreasing and positive, with $\partial f/\partial r < 0$ and $f>0$ on $B$. Finally, if equality holds in \eqref{eq:Tal1} or \eqref{eq:Tal2} then $\Omega$ is a geodesic ball up to a set of measure $0$. 
%, and after centering the ball at the same point as $B$, one has $u = f$ a.e.
\end{theorem}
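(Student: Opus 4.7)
The plan is to adapt the classical Talenti comparison argument to the constant-curvature setting. By uniqueness of the Poisson equation with radial data on $B$, the function $f$ is automatically radially symmetric. Introduce the distribution function $\mu(t)=V(\{u>t\})$; since $\{|\nabla u|=0\}$ is negligible, $\mu$ is absolutely continuous and the coarea formula gives
\[
-\mu'(t)=\int_{\{u=t\}}\frac{dS}{|\nabla u|}
\]
for a.e.\ $t>0$. Since $u\geq 0$ with $u=0$ on $\partial\Omega$, every superlevel set $\{u>t\}$ with $t>0$ is compactly contained in $\Omega$, and the divergence theorem applied to $\Delta u=-\Delta_{\mathfrak g} u$ yields $\int_{\{u=t\}}|\nabla u|\,dS=\int_{\{u>t\}}U\,dV$.

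The core estimate combines three ingredients on the level set $\{u=t\}$. First, Cauchy--Schwarz gives $P(\{u>t\})^2\leq \bigl(\int_{\{u>t\}}U\,dV\bigr)(-\mu'(t))$. Second, Hardy--Littlewood applied to the complement of $\{u>t\}$, together with the equimeasurability identity $\int_\Omega U\,dV=\int_B F\,dV$, produces
\[
\int_{\{u>t\}}U\,dV\leq \Phi(\mu(t)),\qquad \Phi(s):=\int_{B_s^*}F\,dV,
\]
where $B_s^*\subset B$ is the geodesic ball of volume $s$ centered at the center of $B$. Third, the sharp isoperimetric inequality on $M$---classical on each of $\RN$, $\Hyp^N$ and $\Sph^N$ with geodesic balls as unique extremizers---gives $P(\{u>t\})\geq\mathcal{I}_M(\mu(t))$. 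Assembling these produces the differential inequality
\[
\mathcal{I}_M(\mu(t))^2\leq \Phi(\mu(t))\,(-\mu'(t)).
\]
The identical chain is saturated for the radial $f$ on $B$, since each level set $\{f>t\}$ is a geodesic ball, $|\nabla f|$ is constant on it, and the rearrangement inequalities become equalities. A standard ODE comparison, integrated downward from $\operatorname{ess\,sup}u$ (respectively $\operatorname{ess\,sup}f$), then yields $\mu(t)\leq \mu_f(t)$ for every $t\geq 0$, equivalent to the pointwise bound $u^*\leq f$.

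Inequality \eqref{eq:Tal1} follows from $u^*\leq f$ by integration, using $\int_\Omega u^p\,dV=\int_B (u^*)^p\,dV$. For \eqref{eq:Tal2} I combine the coarea identity $\int_\Omega|\nabla u|^q\,dV=\int_0^\infty\int_{\{u=t\}}|\nabla u|^{q-1}\,dS\,dt$ with the bound $\int_{\{u=t\}}|\nabla u|\,dS\leq \Phi(\mu(t))$ and its equality counterpart on $B$, together with monotonicity of $\Phi$ on the relevant range (inherited from the radial monotonicity of $F$), to treat $q=2$; for $q\in(0,2)$ one interpolates through H\"older against the pointwise bound. The claim that $f>0$ and $\partial f/\partial r<0$ on $B$ follows from direct analysis of the radial ODE $((\sn r)^{N-1}f'(r))'=-(\sn r)^{N-1}F(r)$ with $f(R)=0$, together with the already-established inequality $f\geq u^*\geq 0$.

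For the equality case, equality in \eqref{eq:Tal1} or \eqref{eq:Tal2} propagates back through the chain and forces $\mu(t)=\mu_f(t)$ for a.e.\ $t$, hence equality in the isoperimetric inequality at almost every level, so each $\{u>t\}$ is a geodesic ball up to a null set. Since $\{u=0\}\cap\Omega$ is itself negligible (otherwise $\nabla u$ would vanish on a positive-measure set, contradicting the hypothesis), letting $t\to 0^+$ shows that $\Omega$ coincides with a geodesic ball up to measure zero. The main obstacle I anticipate is the rigorous ODE comparison step in the non-Euclidean geometries, where $\mathcal{I}_M$ is transcendental and where one must carefully verify the monotonicity/sign conditions needed to integrate the differential inequality; in particular, on the sphere the hypothesis $1\notin H_0^2(\Omega)$ is needed to confine $\mu(t)$ to the volume range in which $\mathcal{I}_{\Sph^N}$ is strictly monotone, so that the integration-comparison argument can be carried out.
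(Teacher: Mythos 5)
Your strategy is the classical Talenti route---establish a differential inequality for the distribution function $\mu$, and saturate it on the ball---while the paper's proof uses a structural variant: it introduces the level-matching function $b(a)=\nu^{-1}(\mu(a))$ (where $\nu$ is the distribution function of $f$) and shows $b'(a)\geq 1$ a.e., which avoids ever writing the isoperimetric profile $\mathcal{I}_M$ explicitly and makes the non-Euclidean cases just as easy as the Euclidean one. Both roads lead to $u^*\leq f$, but the paper's change of variable $a\mapsto b(a)$ is exactly what makes the gradient estimate for general $q\in(0,2]$ clean via a single H\"older step. That said, your sketch contains a genuine gap.

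The gap is the claim that $\Phi(s)=\int_{B_s^*}F\,dV$ (sign aside) is monotone ``inherited from the radial monotonicity of $F$.'' This is false under the hypothesis $u\geq 0$. The theorem does \emph{not} assume $\Delta u\leq 0$, so $-U$, and therefore its rearrangement $-F$, is allowed to change sign; $-F$ is decreasing but can become negative near $\partial B$, at which point $\Phi$ turns around and decreases. Monotonicity of $\Phi$ only holds in Talenti's original setting with $-\Delta u\geq 0$, and the whole point of the later refinement (used here) is to replace that hypothesis with $u\geq 0$. What one actually has, and needs, is weaker: $\Phi$ is concave (because $-F$ is radially decreasing) with $\Phi(0)=0$ and $\Phi(V(B))=-\int_B F\,dV\geq 0$, hence $\Phi\geq 0$ on its domain. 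With nonnegativity instead of monotonicity, your $q=2$ argument can be salvaged by the change of variable $t\mapsto b(t)$ together with $b'\geq 1$, but that is precisely the paper's mechanism, not interpolation.

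Two smaller issues. First, the logical order is backward: you invoke ``saturation on $B$'' in the core estimate, which already requires that the superlevel sets $\{f>s\}$ are centered balls, i.e., that $f$ is radially decreasing; so $\partial f/\partial r<0$ must be established \emph{before} the comparison, not derived afterwards from $f\geq u^*\geq 0$. (Also $f\geq u^*\geq 0$ would only give $f\geq 0$, not strict positivity.) The paper proves strict radial monotonicity first, from $\int_B F\,dV\leq 0$ together with $F(0)<0$, the latter forced by $u\not\equiv 0$. Second, the divergence-theorem identity should read $\int_{\{u=t\}}|\nabla u|\,dS=-\int_{\{u>t\}}U\,dV$, not $+$; with the sign as written the right side of your Cauchy--Schwarz step would be negative.
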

The symmetric decreasing rearrangement of a function $h$ on $\Omega$ is a radial function $h^*$ on $B$ whose superlevel sets have the same volume as those of $h$: 
\[
V(\{ x \in \Omega : h(x)>a \})=V(\{ x \in B : h^*(x)>a \}) , \quad a \in \R .
\]

\subsubsection*{Notes on the literature.} Elliptic comparison theory goes back to P\'{o}lya and Szeg\H{o} \cite[Note F]{PolyaSzego1951}, who in the Euclidean case proved comparisons for the $L^2$ norm of $u$ and $\nabla u$. The theory is most closely associated with Talenti \cite{Tal76}, who established that $u^* \leq f$ and $\lVert \nabla u \rVert_q \leq \lVert \nabla f \rVert_q$, and also treated more general elliptic operators. Talenti assumed $\Delta u \leq 0$, but later recognized in \cite{Tal81} that it was more useful to assume instead $u \geq 0$, which is the version of the result presented in the elegant text by Kesavan \cite[Theorem 3.1.1]{Kesavan2006}. 

A generalization to manifolds satisfying a Ricci curvature bound was given by Colladay, Langford and McDonald \cite[Theorem 3.4]{ColladayLangfordMcDonald18}. See also Krist\'{a}ly  \cite[Theorem 3.1]{Kristaly2020}, \cite[Theorem 4.1]{Kristaly2022}. Those authors do not state a comparison result on $\nabla u$. Note Krist\'{a}ly used a slightly different rearrangement than we do here, in which (borrowing terminology from earlier in this paper) the Laplacian of $u_\Omega$ is rearranged on all of $\Omega$ before being split up for use in the comparisons on $\Omega_+$ and $\Omega_-$. That yields a slightly weaker result than first splitting $u$ on $\Omega_\pm$ and then rearranging the Laplacian on each piece, which is how we proceed in the current paper. 

\begin{proof}[Proof of Theorem \ref{talenti}]
We follow Talenti's method except that, as he surely had in mind when developing the theory, by restricting to functions whose critical set has volume zero we are able to work more directly with level sets. Further, we avoid explicit expressions for $f$ in terms of $F$, so that the proof flows easily in all three geometries and highlights the analogies between $u$ and $f$. 

A well known fact needed for the proof is that the distribution function $\mu(a)=V(u>a)$ is absolutely continuous. For readers' convenience, we include the short proof. For each $\e>0$, the coarea formula  yields that 
\[
\int_{\{ u>a \}} \frac{|\nabla u|}{\e+|\nabla u|} \, dV = \int_a^\infty \int_{\{ u=a \}} \frac{1}{\e+|\nabla u|} \, dP da ,
\]
where $dP$ is the surface area (perimeter) element. Letting $\e \to 0$ and applying monotone convergence, one obtains
\[
\int_{\{ u>a \}} 1_{\{ |\nabla u| \neq 0 \}} \, dV = \int_a^\infty \int_{\{ u=a \}} \frac{1}{|\nabla u|} \, dP da . 
\]
The set where $|\nabla u|=0$ has zero volume by hypothesis and so the integral on the left side evaluates to $\mu(a)$, giving the desired absolute continuity with derivative 
\[
- \mu^\prime(a) = \int_{\{u=a\}} \frac{1}{|\nabla u|} \, dP > 0
\]
for almost every $a$ in the range of $u$. 

Continuity of the distribution function implies that each level set $\{u=a\}$ has measure zero. 

\smallskip
\textsc{Step 1 ($f$ is strictly radially decreasing)}. Take $u$ and $f$ as in the theorem, so that in particular $u \geq 0$. 

Since $-F$ is equimeasurable with $-U$, we find  
\begin{equation} \label{eq:Fint}
\int_B F \, dV = \int_\Omega U \, dV = \int_\Omega \Delta u \, dV = \int_{\partial \Omega} \frac{\partial u}{\partial n} \, dP \leq 0 ,
\end{equation}
using that $u$ is nonnegative and satisfies the Dirichlet boundary condition. Strictly speaking, the application of the divergence theorem here requires some additional regularity of $\partial \Omega$. Talenti \cite[p.{\,}170]{Tal79} showed how to circumvent that concern through a good choice of trial function, getting that  
\begin{equation} \label{eq:Ueq}
\int_{\{ u>a \}} U \, dV = \frac{d\ }{da} \int_{\{ u>a \}} |\nabla u|^2 \, dV = - \int_{\{ u=a \}} |\nabla u| \, dP \leq 0
\end{equation}
for almost every $a>0$ (relying on the coarea formula for the second equality), after which letting $a \to 0$ yields $\int_\Omega U \, dV \leq 0$ as needed for \eqref{eq:Fint}. 

If $F(0) \geq 0$ then $F \geq 0$ since $F$ is radially increasing, and so $F \equiv 0$ by \eqref{eq:Fint} which means $U \equiv 0$ too, and thus $u$ vanishes identically, contradicting the hypotheses. Hence $F(0)<0$, which implies by \eqref{eq:Fint} (once more using that $F$ is radially increasing) that $\int_{B(r)} F \, dV < 0$ for all $r$ smaller than radius of $B$. Thus $\partial f/\partial r < 0$ and so $f$ is strictly radially decreasing, and $f>0$ on $B$. 

Further, each level set $\{f=b\}$ is a geodesic sphere and so has measure zero.  

\smallskip
\textsc{Step 2 (matching superlevel volumes)}. For each $a \in [0,\lVert u \rVert_\infty]$, a unique number $b \in [0,\lVert f \rVert_\infty]$ exists such that 
\[
V(u>a) = V(f>b) ,
\]
as follows. Indeed, the distribution function $\mu(a)=V(u>a)$ is an absolutely continuous function that is strictly decreasing (since $u$ is continuous) with $\mu(\lVert u \rVert_\infty)=0$ and $\mu(0)=V(\Omega)$, and similarly $\nu(b)=V(f>b)$ is strictly decreasing with $\nu(\lVert f \rVert_\infty)=0$ and $\nu(0)=V(B)=V(\Omega)$. Hence the desired $b$ value is 
\[
b(a)=\nu^{-1}(\mu(a)) .
\]
The map $a \mapsto b(a)$ is strictly increasing, and in fact $b(a)$ is absolutely continuous because $\nu^{-1}$ is $C^2$-smooth on $(0,V(B))$ and hence Lipschitz there, while $\mu$ is absolutely continuous. 

\smallskip
\textsc{Step 3 (comparing gradients on level sets)}. Suppose $a$ is a regular value for $u$, so that $|\nabla u| \neq 0$ on the level set $\{ u=a \}$. The integral of $|\nabla u|$ over that level set is less than or equal to the integral of $|\nabla f|$ over the corresponding level set of $f$: 
\begin{align}
0 < \int_{\{u=a\}} |\nabla u| \, dP 
& = \int_{\{u>a\}} (-\Delta u) \, dP \qquad \text{by \eqref{eq:Ueq}} \notag \\
& \leq \int_{\{f>b\}} (-\Delta f) \, dP \label{talentigrad} \\
& = \int_{\{f=b\}} |\nabla f| \, dP \label{talentigrad2} 
\end{align}
where inequality \eqref{talentigrad} uses that the symmetric decreasing rearrangement of $-\Delta u = -U$ is the function $-F=-\Delta f$ and the sets $\{u>a\}$ and $\{f>b\}$ over which the two functions are integrated have the same volume by our choice of $b$; the final equality uses the divergence theorem, noting $-\partial f/\partial n = |\nabla f|$ because $f$ is radially decreasing. 

\smallskip
\textsc{Step 4 (comparing reciprocal gradients)}. Write $P$ for perimeter in the sense of De Giorgi, so that $P(u>a)=\int_{\{u=a\}} \, dP$ for almost every $a$, by \cite[p.{\,}712]{Tal76}. Then Cauchy--Schwarz on the level set implies that 
\begin{align*}
\frac{1}{P(u>a)^2} \int_{\{u=a\}} \frac{1}{|\nabla u|} \, dP 
& \geq \frac{1}{\int_{\{u=a\}} |\nabla u| \, dP} \\
& \geq \frac{1}{\int_{\{ f=b\}} |\nabla f| \, dP} && \text{by \eqref{talentigrad2}} \\
& = \frac{1}{P(f>b)^2} \int_{\{f=b\}} \frac{1}{|\nabla f|} \, dP 
\end{align*}
where we used that $f$ is radially decreasing and so $|\nabla f|$ is constant on the sphere $\{ f=b \}$; note that the sphere equals the boundary of the geodesic ball $\{ f>b \}$. 

The isoperimetric inequality holds for subdomains of Euclidean, spherical and hyperbolic space (see \cite[Theorem 14.3.1]{BZ88}), and so since the superlevel sets of $u$ and $f$ have the same volume, their perimeters satisfy $P(u>a) \geq P(f>b)$. Hence the displayed inequality implies 
\[
\int_{\{u=a\}} \frac{1}{|\nabla u|} \, dP \geq \int_{\{f=b\}} \frac{1}{|\nabla f|} \, dP 
\]
(where we note that the right side of the inequality is positive), or equivalently 
\begin{equation}
- \mu^\prime(a) \geq - \nu^\prime(b) .
\label{talentireciprocal} 
\end{equation}
If equality holds in \eqref{talentireciprocal} then equality must hold in the isoperimetric inequality, and so (again by \cite[Theorem 14.3.1]{BZ88}) the superlevel set $\{u >a\}$ is a geodesic ball up to a set of measure zero. 

\smallskip
\textsc{Step 5 ($b$ grows faster than $a$)}. For almost every value $a$, 
\begin{align}
0 < - \mu^\prime(a) 
& = - \frac{d\ }{da} \nu(b) \qquad \text{since $\mu(a)=\nu(b)$} \notag \\
& = - \nu^\prime(b) \frac{db}{da} \label{talentibprime} \\
& \leq - \mu^\prime(a) b^\prime(a) \qquad \text{by \eqref{talentireciprocal},} \notag
\end{align}
and so $b^\prime(a) \geq 1$ a.e. If equality holds, $b^\prime(a) = 1$, then equality holds in \eqref{talentireciprocal} and so $\{u >a\}$ is a ball up to a set of measure zero.  

\smallskip
\textsc{Step 6 ($u^*$ is less than $f$)}. Since $b(0) = 0$ and $b(a)$ is absolutely continuous, Step 5 implies $b(a) \geq a$ for $a \in [0,\lVert u \rVert_\infty]$ and hence 
\[
V(u^* > b) \leq V(u^*>a) = V(u>a) = V(f>b) 
\]
for each $b$. Therefore the radially decreasing functions $u^*$ and $f$ satisfy $u^* \leq f$ on $B$. It follows by the classic technique of majorization that $\int_\Omega \Phi(u) \, dV \leq \int_B \Phi(f) \, dV$ for every convex increasing $\Phi : [0,\infty) \to \R$. In particular, $\int_\Omega u^p \, dV \leq \int_B f^p \, dV$ for each $p \geq 1$. If equality occurs for some $p$ then by the layer cake representation of the $L^p$ norm in terms of the distribution function, we deduce $b^\prime(a)=1$ for almost every level $a$. Hence $\{u>a\}$ is a ball up to a set of measure $0$, for each such $a$. 

\smallskip
\textsc{Step 7 (comparing gradient norms)}. 
When $0 < q \leq 2$,  
\[
\begin{split}
\int_{\{u=a\}} |\nabla u|^{q-1} \, dP 
& = \int_{\{u=a\}} |\nabla u|^{q/2} \, |\nabla u|^{-(2-q)/2} \, dP \\
& \leq \left( \int_{\{u=a\}} |\nabla u| \, dP \right)^{\!\! q/2} \left( \int_{\{u=a\}} |\nabla u|^{-1} \, dP \right)^{\!\! (2-q)/2} 
\end{split}
\]
by H\"{o}lder. Hence by \eqref{talentigrad2} and \eqref{talentibprime}, 
\begin{align}
\int_{\{u=a\}} |\nabla u|^{q-1} \, dP 
& \leq  \left( \int_{\{f=b\}} |\nabla f| \, dP \right)^{\!\! q/2} \left( \int_{\{f=b\}} |\nabla f|^{-1} \, dP \, b^\prime(a) \right)^{\!\! (2-q)/2} \notag \\
& \leq b^\prime(a) \int_{\{f=b\}} |\nabla f|^{q-1} \, dP \label{eq:bprime}
\end{align}
since $|\nabla f|$ is constant on $\{ f=b \}$ and $b^\prime(a) \geq 1$. Integrating the preceding inequality with respect to $a$, we find 
\begin{align*}
\int_\Omega |\nabla u|^q \, dV 
& = \int_0^\infty \int_{\{u=a\}} |\nabla u|^{q-1} \, dP da \\
& \leq \int_0^\infty \int_{\{f=b\}} |\nabla f|^{q-1} \, dP \, \frac{db}{da} da = \int_B |\nabla f|^q \, dV ,
\end{align*}
which completes the proof of the inequality in the theorem. If equality occurs for some $q$, then for almost every level $a$ one has equality in \eqref{eq:bprime} and hence $b'(a)=1$, implying that $\{u>a\}$ is a ball up to a set of measure $0$. 

\smallskip
\textsc{Step 8 (equality case)}. 
If equality holds in \eqref{eq:Tal1} or \eqref{eq:Tal2}, then as shown in Steps 6 and 7, almost every superlevel set $\{ u>a \}$ is a geodesic ball up to a set of measure $0$. By taking a sequence of such $a$-values decreasing to $0$, we express $\Omega$ as a countable union of balls (not necessarily concentric), minus a set of measure $0$. This expanding union of balls is itself a ball. Thus $\Omega$ is a geodesic ball up to a set of measure $0$. 
\end{proof}
%

%\bibliographystyle{mybst}
%\bibliography{References}

\begin{thebibliography}{99}

 
\bibitem{adolfsson}
V. Adolfsson,
\emph{$L^2$-integrability of second-order derivatives for Poisson's equation in nonsmooth domains},
Math. Scand. 70 (1992), 146--160. 
 

\bibitem{AshBen95}
M. S. Ashbaugh and R. D. Benguria, 
\emph{On Rayleigh's conjecture for the clamped plate and its generalization to three dimensions}, 
Duke Math. J. 78 (1995), 1--17.

\bibitem{ABL97}
M.  S. Ashbaugh, R. D. Benguria and R. S. Laugesen, 
\emph{Inequalities for the first eigenvalues of the clamped plate and buckling problems}, 
Internat. Ser. Numer. Math., 123
Birkh\"{a}user Verlag, Basel, 1997, 95--110.

\bibitem{AB03}
M. S. Ashbaugh and D. Bucur,  
\emph{On the isoperimetric inequality for the buckling of a clamped plate}, 
Special issue dedicated to Lawrence E. Payne
Z. Angew. Math. Phys.54(2003), no.5, 75--770.

\bibitem{AL96}
M.  S. Ashbaugh and R. S. Laugesen, 
\emph{Fundamental tones and buckling loads of clamped plates}, 
Ann. Scuola Norm. Sup. Pisa Cl. Sci. (4) 23 (1996), 383--402.

\bibitem{Ba08}
M. Barkatou,  
\emph{A symmetry result for a fourth order overdetermined boundary value problem}, 
Appl. Math. E-Notes 8 (2008), 76--81.

\bibitem{Be86}
A. Bennett,  
\emph{Symmetry in an overdetermined fourth order elliptic boundary value problem},
SIAM J. Math. Anal. 17 (1986), 1354--1358.

\bibitem{BuKe22}
D. Buoso, Davide and J. Kennedy,  
\emph{The Bilaplacian with Robin boundary conditions.}
SIAM J. Math. Anal.   54 (2022), no. 1, 36--78.

\bibitem{BZ88}
Yu. D. Burago and V. A. Zalgaller, 
Geometric Inequalities. 
Grundlehren Math. Wiss., 285. Springer Ser. Soviet Math. 
Springer-Verlag, Berlin, 1988.

\bibitem{ColladayLangfordMcDonald18}
D. Colladay, J. J. Langford and P. McDonald, 
\emph{Comparison results, exit time moments, and eigenvalues on Riemannian manifolds with a lower Ricci curvature bound}, 
J. Geom. Anal. 28 (2018), 3906--3927.

\bibitem{Da90}
R. Dalmasso,  
\emph{Un probl\`{e}me de sym\'{e}trie pour une \'{e}quation biharmonique},
Ann. Fac. Sci. Toulouse Math. (5)11(1990), 4--53.

%\bibitem{GGS91}{F. Gazzola}, H.C.-Grunau and G. Sweeers, \newblock {\em Polyharmonic boundary value problems. Positivity preserving and nonlinear higher order elliptic equations in bounded domains}. Berlin: Springer (2010)

\bibitem{Gr85}
P. Grisvard.
Elliptic Problems in Nonsmooth Domains. 
{\em Monographs and Studies in Mathematics}, vol. 24. 
Pitman (Advanced Publishing Program), Boston, MA, 1985.

\bibitem{Hebey1996}
E. Hebey, 
Sobolev Spaces on Riemannian Manifolds. 
Lecture Notes in Math., 1635, Springer--Verlag, Berlin, 1996.

\bibitem{Henrot2006}
A. Henrot, 
Extremum Problems for Eigenvalues of Elliptic Operators.
Front. Math. Birkh\"{a}user Verlag, Basel, 2006.

\bibitem{henrot-pierre}
A. Henrot and M. Pierre,
Variation et Optimisation de Formes.
Vol. 48. Math\'ematiques \& Applications (Berlin). 
Une analyse g\'eom\'etrique. Springer, Berlin, 2005.

\bibitem{Kesavan2006}
S. Kesavan, 
Symmetrization \& Applications. 
Ser. Anal., 3, World Scientific Publishing Co. Pte. Ltd., Hackensack, NJ, 2006.

\bibitem{Kristaly2020}
A. Krist\'{a}ly, 
\emph{Fundamental tones of clamped plates in nonpositively curved spaces},  
Adv. Math. 367 (2020), 107113, 39 pp.

\bibitem{Kristaly2022}
A. Krist\'{a}ly, 
\emph{Lord Rayleigh's conjecture for vibrating clamped plates in positively curved spaces}, 
Geom. Funct. Anal. 32 (2022), 881--937.

\bibitem{L24thesis}
R. Leylekian, 
\emph{Des plaques vibrantes: optimisation spectrale pour l'op\'{e}rateur bilaplacien},
Aix--Marseille Universit\'{e}, 2024. 

\bibitem{L24a}
R. Leylekian, 
\emph{Sufficient conditions yielding the Rayleigh Conjecture for the clamped plate}, 
Ann. Mat. Pura Appl. (4) (2024). \url{https://doi.org/10.1007/s10231-024-01454-y}

\bibitem{L24b}
R. Leylekian, 
\emph{Towards the optimality of the ball for the Rayleigh conjecture concerning the clamped plate}, 
Arch. Ration. Mech. Anal. 248 (2024), Paper No. 28, 35 pp.
 
\bibitem{mityagin}
B. S. Mityagin,
\emph{The zero set of a real analytic function},
Math. Notes 107.3 (2020), pp. 529--530.

\bibitem{Nad95}
N. S. Nadirashvili, 
\emph{Rayleigh's conjecture on the principal frequency of the clamped plate}, 
Arch. Rational Mech. Anal. 129 (1995), 1--10.

\bibitem{PaSc94}
 L. E. Payne and P. W. Schaefer, 
\emph{On overdetermined boundary value problems for the biharmonic operator},
J. Math. Anal. Appl. 187 (1994), 598--616.

\bibitem{PolyaSzego1951}
G. P\'{o}lya and G. Szeg\H{o},
Isoperimetric Inequalities in Mathematical Physics. 
Ann. of Math. Stud., No. 27. 
Princeton University Press, Princeton, NJ, 1951.

\bibitem{Shimakura1992}
N. Shimakura, 
Partial Differential Operators of Elliptic Type.
Translated and revised from the 1978 Japanese original by the author. 
Transl. Math. Monogr., 99. American Mathematical Society, Providence, RI, 1992. 

\bibitem{Tal76}
G. Talenti, 
\emph{Elliptic equations and rearrangements}, 
Ann. Scuola Norm. Sup. Pisa Cl. Sci. (4) 3 (1976), 697--718.

\bibitem{Tal79}
G. Talenti, 
\emph{Nonlinear elliptic equations, rearrangements of functions and Orlicz spaces}, 
Ann. Mat. Pura Appl. (4) 120 (1979), 160--184.

\bibitem{Tal81}
G. Talenti, 
\emph{On the first eigenvalue of the clamped plate}, 
Ann. Mat. Pura Appl. (4) 129 (1981), 265--280.

\end{thebibliography}

\end{document}